\renewcommand{\geq}{\geqslant}
\renewcommand{\leq}{\leqslant}
\title{\textbf{An existence criterion for a cycle such that the vertex set beyond this cycle is independent}}
\author{Nikolai Karol}
\date{2020}
\begin{document}

\maketitle

\newtheorem{thm}{Theorem}
\newtheorem{lem}{Lemma}
\renewcommand*{\proofname}{\bf Proof}
\newtheorem{cor}{Corollary}
\newtheorem*{conj}{Conjecture}
\newtheorem{claim}{Claim}
\newtheorem{subcl}{Subclaim}[claim]
\theoremstyle{definition}
\newtheorem{defin}{Definition}
\theoremstyle{remark}
\newtheorem{rem}{\bf Remark}
\theoremstyle{configuration}
\newtheorem{conf}{\bf Configuration}

\def\mmax{\mathop{\rm max}}
\def\q#1.{{\bf #1.}}
\def\P{{\rm Part}}
\def\QP{{\rm QPart}}
\def\I{{\rm Int}}
\def\R{{\rm Bound}}
\def\RC{{\rm Cut}}
\def\B{{\rm BT}}
\def\T{{\rm T}}
\def\N{{\rm N}}
\def\GM{{\cal GM}}

\begin{center}
    \textbf{Abstract}
\end{center}

{\small We prove that if $G$ is a 2-connected graph with $\delta(G) \geqslant \frac{v(G) + 2}{3}$ then $G$ has a cycle $W$ such that $V(G - W)$ is independent. This result is best possible in the sense that it becomes false if $\frac{v(G) + 2}{3}$ is replaced by any smaller number.}

\vspace{8.5mm}

\textbf{\Large{1 Introduction}}

$ $

The Hamiltonian cycle existence problem is well-known and well explored. Apart from classical Dirac and Ore's criteria, Chvátal's theorems (see~\cite{bondy1}) are worthy of mention. In addition to these results, there is a problem whether the graph power is Hamiltonian. There are classical results here: well-known Fleischner's theorem (see~\cite{diestel2}) and Chartrand and Kapoor's theorem of~\cite{chartrandkapoor}.

As for long cycles, there is classical Linial's theorem of~\cite{linial3}.

$ $

\textbf{Theorem (N. Linial, 1975)} \textit{Let $G$ be a  2-connected graph and} 

$$m = \min\limits_{xy \notin E(G)} d_{G}(x) + d_{G}(y)\text{.}$$

\textit{Then the length of the longest cycle of G is at least min(m, v(G)).}

$ $

Thomassen's paper~\cite{thomassen4} contains an existence criterion in planar graphs for a cycle with special condition: the vertex set beyond it must be independent. Our work contains the following existence criterion for such a cycle in terms of graph's minimum degree.

\begin{thm} \label{mytheorem}  Let $G$ be a 2-connected graph, $v(G) = n$ and $\delta(G) \geqslant \frac{n + 2}{3}$. Then $G$ has a cycle such that the vertex set beyond this cycle is independent.
\end{thm}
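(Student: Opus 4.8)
The plan is to let $C$ be a longest cycle of $G$ — breaking ties, if convenient, by choosing among longest cycles one for which the number of vertices of $G$ lying in a component of $G-C$ with at least two vertices is as small as possible — and to show that under $\delta(G)\ge\frac{n+2}{3}$ the exterior $V(G-C)$ must be independent. Suppose not. Then some component $H$ of $G-C$ carries an edge; since $G$ is $2$-connected and $H$ is connected, every neighbour of $H$ outside $H$ lies on $C$ and there are at least two of them. I would then fix a longest path $P=x_0x_1\cdots x_p$ of $H$ (so $p\ge 1$), whose endpoints $x_0,x_p$ have all their $H$-neighbours on $P$. Writing $r=|V(G-C)|=n-|C|$, this gives $|N_C(x_0)|,|N_C(x_p)|\ge\delta-p$, and since $|V(H)|\ge p+1$ we also get $r\ge p+1$, i.e. $|C|\le n-p-1$.

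Next I would collect the rotation/exchange restrictions forced by the maximality of $C$. Orient $C$ and put $A=N_C(x_0)$, $B=N_C(x_p)$. Because rerouting a sub-arc of $C$ between a vertex of $A$ and a vertex of $B$ through the detour $x_0Px_p$ (together with the two joining edges) must not lengthen $C$, one obtains: $A$ has no two consecutive vertices of $C$; the successor set $A^{+}$ is an independent set of $G$, disjoint from $A\cup B$; and along $C$ every vertex of $B$ is at arc-distance at least $p+2$ from every vertex of $A$ (with the symmetric statements for $B$ and $B^{+}$, and the short detours $x_0x_1$, $x_{p-1}x_p$ handling the degenerate configurations). Feeding $|A|,|B|\ge\delta-p$ and these separations into an analysis of how $A$ and $B$ can interleave around $C$ yields a lower bound of the shape $|C|\ge 2|A|+2|B|+2p+O(1)\ge 4\delta-2p+O(1)$; combined with $|C|\le n-p-1$, this already forces $p$, hence $|V(H)|$, to be of order $n/3$.

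The step I expect to be the main obstacle is closing the remaining gap: the crude estimate above is consistent with $\delta\ge\frac{n+2}{3}$ and does not yet contradict it — precisely because the extremal graphs described below sit right at the boundary. To finish I would exploit the large independent set $A^{+}$ lying on $C$ (each of its vertices has $\ge\delta$ neighbours, none inside $A^{+}$ and none in $H$, so those neighbours are crowded onto a short stretch of $C$), together with $2$-connectivity (a large component $H$ must have several widely separated attachment points on $C$) and the extremal choice of $C$, to exclude the degenerate interleavings. This is the classical setting of Woodall's Hopping Lemma; I would either invoke it or re-prove the relevant local-surgery statement. The refined inequalities, rewritten via the hypothesis in the form $3\delta-2\ge|C|+r$, should then be contradictory.

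For the sharpness claim I would, for each residue of $n\bmod 3$, take three vertex-disjoint cliques of sizes as equal as possible and summing to $n-2$, together with two further vertices $u,v$ joined to each other and to every clique vertex. This graph is $2$-connected, its minimum degree equals $\bigl\lceil\frac{n+2}{3}\bigr\rceil-1$ (attained inside a smallest clique), and any cycle meets $u$ and $v$ at most once each, so its intersection with the union of the three cliques consists of at most two paths, each contained in a single clique; hence some clique — a set of at least two mutually adjacent vertices — lies entirely off the cycle, so no cycle of this graph has independent exterior. This shows that $\frac{n+2}{3}$ cannot be replaced by any smaller number.
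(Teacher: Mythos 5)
Your overall plan is to prove the stronger statement that (under $\delta(G)\geqslant\frac{n+2}{3}$ and 2-connectivity) a \emph{longest} cycle already has independent exterior, via rotation/exchange arguments around that cycle. That statement is in fact true (it is essentially Bondy's theorem on dominating longest cycles, usually proved with exactly the kind of hopping-lemma machinery you allude to), so the route is viable in principle; but as written your argument has a genuine gap exactly where the work lies. The facts you do establish (no two consecutive vertices of $A$ on $C$, independence of $A^{+}$, arc-distance at least $p+2$ between distinct vertices of $A$ and $B$, $|A|,|B|\geqslant\delta-p$, $|C|\leqslant n-p-1$) are the standard easy surgeries, and even the interleaving bound $|C|\geqslant 2|A|+2|B|+2p+O(1)$ is only sketched (it needs care when $A\cap B\neq\varnothing$ or when the two sets do not sit in two clean blocks). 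More importantly, you yourself concede that this bound is consistent with $\delta\geqslant\frac{n+2}{3}$, and the step that is supposed to produce the contradiction --- exploiting $A^{+}$, 2-connectivity, the tie-breaking choice of $C$ (which is never actually used), and ``the refined inequalities, rewritten via $3\delta-2\geqslant|C|+r$'' --- is asserted to ``then be contradictory'' without being carried out. Naming Woodall's Hopping Lemma is not a substitute for this derivation: identifying the precise local surgeries and the counting that consumes the $+2$ in $\frac{n+2}{3}$ is the entire content of the theorem, and it is missing.

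For comparison, the paper does not work with a longest cycle at all: it first saturates $G$ by adding edges, proves the existence of a Hamiltonian path (Claim~1), then takes a cycle $T$ of maximum length $r$ whose exterior induces a path $H$, shows $r\geqslant\frac{n}{2}$ (Claim~2), and closes by a case analysis counting $a$-, $b$- and $ab$-vertices and sections on $T$, plus Claims~3--5 turning $H$ into a cycle and finally into a $p$--$q$ Hamiltonian path that lengthens $T$; all the delicate counting you defer is done explicitly there. Your sharpness construction (three near-equal cliques plus two vertices joined to everything) is correct and is essentially the paper's extremal example, so that part of your proposal stands; the existence half, however, is an outline of a known but nontrivial theorem rather than a proof.
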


 

\vspace{8.5mm}

\textbf{\Large{2 Proof of Theorem~\ref{mytheorem}}}

$ $

We will invoke the following classical lemma in our proof, which is used in the proofs of Ore's theorem and several Chvátal's theorems.  

\begin{lem} \label{lemma}  Let $m > 2$ and $u_{1}...u_{m}$ be a path of the maximal length in the graph $G$, $d_{G}(u_{1}) + d_{G}(u_{m}) \geqslant m$. Then $G$ has a cycle of length m.
\end{lem}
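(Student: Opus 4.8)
The plan is to exploit the maximality of the path through the standard crossing (rotation) argument. First I would record the only structural fact that maximality gives: since $u_1 \cdots u_m$ is a path of maximal length, neither endpoint can have a neighbor off the path, for otherwise the path could be extended. Hence every neighbor of $u_1$ and every neighbor of $u_m$ lies among $u_2,\dots,u_m$ and $u_1,\dots,u_{m-1}$ respectively, so $d_G(u_1)\le m-1$ and $d_G(u_m)\le m-1$, and all relevant adjacencies are indexed by $\{1,\dots,m-1\}$.

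Next I would encode the two neighborhoods as subsets of one common ground set and apply pigeonhole. Put
$$S=\{\,i:1\le i\le m-1,\ u_1u_{i+1}\in E(G)\,\},\qquad T=\{\,i:1\le i\le m-1,\ u_iu_m\in E(G)\,\}.$$
The shift $i\mapsto i+1$ gives $|S|=d_G(u_1)$, and directly $|T|=d_G(u_m)$; both sets lie in the $(m-1)$-element set $\{1,\dots,m-1\}$. By inclusion–exclusion,
$$|S\cap T|\ge |S|+|T|-(m-1)=d_G(u_1)+d_G(u_m)-(m-1)\ge m-(m-1)=1,$$
the last inequality being exactly the hypothesis $d_G(u_1)+d_G(u_m)\ge m$. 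Thus some index $i$ satisfies both $u_1u_{i+1}\in E(G)$ and $u_iu_m\in E(G)$.

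Finally I would assemble the cycle from such an $i$. Combining the two chords with the two segments of the path, the closed walk
$$u_1,\,u_{i+1},\,u_{i+2},\,\dots,\,u_m,\,u_i,\,u_{i-1},\,\dots,\,u_2,\,u_1$$
uses the chord $u_1u_{i+1}$, the forward segment up to $u_m$, the chord $u_mu_i$, and the backward segment down to $u_1$; it meets each of $u_1,\dots,u_m$ exactly once and closes up, so it is a cycle of length $m$. The argument is essentially complete as sketched, and I do not anticipate a genuine obstacle: the only point needing care is the off-by-one bookkeeping in the shift defining $S$, together with checking the degenerate endpoints $i=1$ and $i=m-1$ (which correspond to the chord $u_1u_m$ and recover the cycle $u_1u_2\cdots u_mu_1$), and verifying that the displayed closed walk is indeed a simple cycle rather than a shorter closed walk.
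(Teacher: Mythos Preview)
Your argument is correct and is precisely the standard crossing/rotation proof of this classical lemma. The paper itself does not prove Lemma~\ref{lemma}: it merely states it as ``the following classical lemma \dots\ which is used in the proofs of Ore's theorem and several Chv\'atal's theorems'' and then proceeds directly to the proof of Theorem~\ref{mytheorem}, so there is nothing to compare against beyond noting that your shifted-index pigeonhole is exactly the argument the paper is implicitly invoking.
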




$ $

\paragraph{Proof of Theorem~\ref{mytheorem}.} For the sake of convenience, 5 Claims are highlighted.

We highlight that we will use $\delta(G) \geqslant \frac{n}{3}$. We will use $\delta(G) \geqslant \frac{n + 2}{3}$ in a couple of places only.

Assuming the converse, there exists graph $G$ such that $v(G) = n$, $\delta(G) \geqslant \frac{n + 2}{3}$, $G$ does not contain a cycle such that the vertex set beyond it is independent. 

Assume that $n < 6$. Since $G$ is 2-connected, $\delta(G) \geqslant 2$. If $n \in \{3, 4\}$ then it follows from $\delta(G) \geqslant 2$ that the graph contains a cycle. This cycle satisfies all the requirements because the vertex set beyond the cycle contains at most 1 vertex. If $n = 5$ then $G$ also has a cycle. If the cycle length is 4 or 5 then, similarly, this cycle meets all the requirements. Hence, the cycle length is 3. Let us denote vertices of the cycle by $r_{1}, r_{2}, r_{3}$ and the remaining vertices by $r_{4}, r_{5}$. If $r_{4}r_{5} \notin E(G)$ then cycle $r_{1}r_{2}r_{3}$ obviously meets all the conditions. Consequently, $r_{4}r_{5} \in E(G)$. Since $G$ is connected, $e_{G}(\{r_{1}, r_{2}, r_{3}\}, \{r_{4}, r_{5}\}) \geqslant 1$. Without loss of generality, $r_{1}r_{4} \in E(G)$. Since $G$ is 2-connected, $G - \{r_{1}\}$ is connected. Therefore, $e_{G}(\{r_{2}, r_{3}\}, \{r_{4}, r_{5}\}) \geqslant 1$. Without loss of generality, $r_{2}r_{4} \in E(G)$ or $r_{2}r_{5} \in E(G)$. If $r_{2}r_{4} \in E(G)$ then $r_{1}r_{4}r_{2}r_{3}$ is a suitable cycle (see figure~\ref{figureone} a). If $r_{2}r_{5} \in E(G)$ then $r_{1}r_{4}r_{5}r_{2}r_{3}$ is a suitable Hamiltonian cycle (see figure~\ref{figureone} b).

\begin{figure}[htb]
	\centering
	\includegraphics[width=0.6\columnwidth, keepaspectratio]{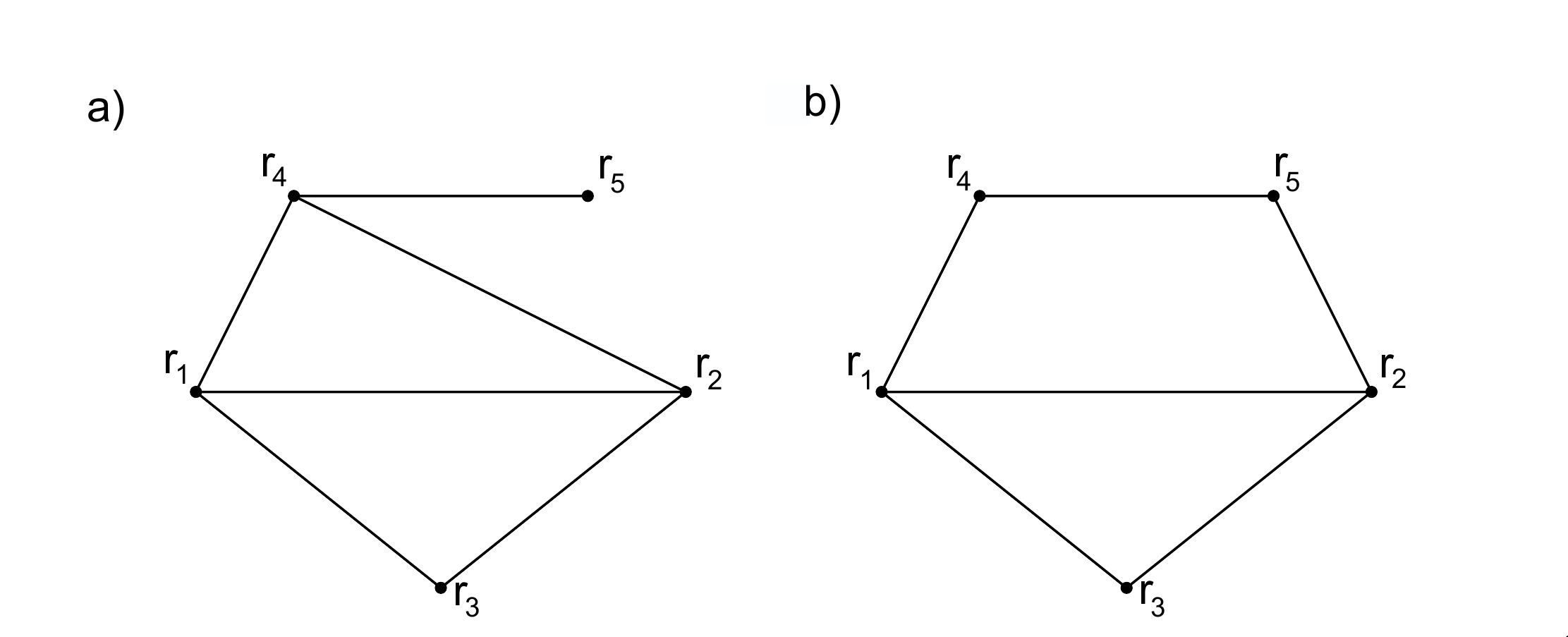}
	\caption{$n = 5$.}
	\label{figureone}
\end{figure}

Therefore, from now on we have that

$$n \geqslant 6 \textit{.} \eqno(1)$$

If all aforementioned properties (2-connectivity, minimum degree, lack of the required cycle)  remain, we add an edge to the graph. Obviously, 2-connectivity and minimum degree properties remain after adding an edge, so we add an edge if absence of the required cycle remains. Let us perform this procedure as long as we can, we stop the procedure if we cannot add any edge. We do not want to make new notations, so we use the same symbol $G$ for the graph we obtain after stopping the procedure. Consequently, $G$ has the following property:

$ $

For any $d_{1}, d_{2} \in V(G)$ such that $d_{1}d_{2} \notin E(G)$, there is a path between $d_{1}$ and $d_{2}$ such that the vertex set beyond this path is independent.

\begin{claim} \label{claimone} $G$ has a Hamiltonian path.

\end{claim}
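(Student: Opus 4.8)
The plan is to argue by contradiction: assume $G$ has no Hamiltonian path. First observe $G$ is not complete — otherwise $G$ (on $n\geq 3$ vertices) has a Hamiltonian cycle, whose vertex complement is empty and hence independent, contradicting the standing assumption. So $G$ has a non-edge, and by the property established just before the claim there is a path joining two non-adjacent vertices whose vertex complement is independent. Among all such paths pick one, $Q=q_1q_2\ldots q_k$, with $k=|V(Q)|$ as large as possible, and set $T:=V(G)\setminus V(Q)$; then $T$ is independent, and $T\neq\varnothing$ since a spanning $Q$ would be a Hamiltonian path. The goal is to derive a contradiction from $T\neq\varnothing$.

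The first step pins down the neighbourhoods of the vertices of $T$. Fix $w\in T$. If $w$ were adjacent to two consecutive vertices $q_i,q_{i+1}$ of $Q$, inserting $w$ between them gives a path $q_1\ldots q_i\,w\,q_{i+1}\ldots q_k$ with the same (non-adjacent) endpoints, complement $T\setminus\{w\}$ still independent, and one more vertex — contradicting maximality of $k$. If $w$ were adjacent to the endpoint $q_1$, then either $w\not\sim q_k$ and $w\,q_1\,q_2\ldots q_k$ is a longer admissible path (endpoints $w,q_k$ non-adjacent, complement $T\setminus\{w\}$ independent), again contradicting maximality; or $w\sim q_k$ and $w\,q_1\,q_2\ldots q_k\,w$ is a cycle whose vertex complement $T\setminus\{w\}$ is independent, contradicting the standing assumption. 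Hence $w\not\sim q_1$, and symmetrically $w\not\sim q_k$. So every $w\in T$ has $N_G(w)\subseteq\{q_2,\ldots,q_{k-1}\}$ with no two of these neighbours consecutive on $Q$, whence $\delta(G)\le d_G(w)\le\big\lceil\tfrac{k-2}{2}\big\rceil$; together with $\delta(G)\ge\tfrac{n+2}{3}$ this forces $k\geq\tfrac{2n+7}{3}$, i.e. $|T|=n-k\leq\tfrac{n-7}{3}$. So $Q$ is long and $T$ is small.

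It remains to rule out $T\neq\varnothing$, which is the main obstacle. The natural tool is a Pósa-type rotation argument. Keeping one end $q_1$ of $Q$ fixed and rotating along $Q$, let $R$ be the set of all vertices that can serve as the other end of a path on the vertex set $V(Q)$ starting at $q_1$; then $q_k\in R$ and $q_1\notin R$. One checks, for every $r\in R$: (i) $q_1\not\sim r$ — else closing up a $q_1$–$r$ path on $V(Q)$ yields a cycle with vertex complement $T$, independent, contradicting the standing assumption; and (ii) no $w\in T$ is adjacent to $r$ — else prepending $w$ to a $q_1$–$r$ path on $V(Q)$ yields an admissible path on $k+1$ vertices (its endpoints $w,q_1$ non-adjacent by the first step, complement $T\setminus\{w\}$ independent), contradicting maximality of $k$. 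By (ii) no vertex of $R$ has a neighbour outside $V(Q)$ (as $V(G)=V(Q)\sqcup T$), so no rotated path extends; the analogue of (ii) gives the same for $q_1$. Pósa's lemma then controls $|N_G(R)|$, and one wants to close by combining this with: $\delta(G)\ge\tfrac{n+2}{3}$ applied to vertices of $R$ and of $T$; the disjointness of $R$ and $N_G(T)$ inside $V(Q)$; the "no two consecutive neighbours'' property; and the size bounds $k\ge\tfrac{2n+7}{3}$, $|T|\le\tfrac{n-7}{3}$. Whether the one-ended rotation above already yields the numerical contradiction, or one must rotate at both ends and track pairs of reachable endpoints (and perhaps re-run the analysis starting instead from a globally longest path of $G$, using the classical Lemma above to locate a long cycle and $2$-connectivity to extend it), is exactly the delicate point; the arguments up to there are short and self-contained, and making this last bookkeeping step close is where the real work lies.
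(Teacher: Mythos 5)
Your first half is sound and runs parallel to the paper: you pick the same extremal object (a longest path with non-adjacent endpoints whose complement is independent), and your insertion/endpoint arguments correctly show that every outside vertex $w$ has all its neighbours on the path, is non-adjacent to both endpoints, and has no two consecutive neighbours along the path, which forces the path to be long. However, the decisive step --- deriving a contradiction from $T \neq \varnothing$ --- is never carried out. You only name the intended tools (one- or two-ended P\'osa rotations, disjointness of $R$ and $N_G(T)$, the degree bound, the length bound) and explicitly concede that whether this bookkeeping closes ``is exactly the delicate point'' and ``where the real work lies.'' That missing computation is precisely the content of the claim, so as written this is a genuine gap rather than a complete proof; your observations (i) and (ii) about the rotation endpoints are correct but do not by themselves produce the numerical contradiction, and you give no argument that the resulting P\'osa-type estimate on $|N_G(R)|$ is incompatible with $\delta(G) \geq \frac{n+2}{3}$ and $k \geq \frac{2n+7}{3}$.

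For comparison, the paper closes without any rotation machinery: with the path $z_1\ldots z_k$, it sets $A = N_G(z_1)$, $B = N_G(z_k)$, and, for a fixed outside vertex $y$, $C = \{z_t : z_{t+1}y \in E(G)\}$. It proves that $z_t \in A$ implies $z_{t-2} \notin B$ (otherwise either a cycle of length $k-1$ is suitable, when $N_G(z_{t-1}) \subseteq L_0$, or one builds a strictly longer admissible path through a neighbour of $z_{t-1}$ outside $L_0$), that $B \cap C = \varnothing$, and that the ``blocked'' vertices $\{z_t : z_{t+2} \in A\}$ (at least $\frac{n}{3}-1$ of them) are disjoint from both $B$ and $C$. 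Since all three sets sit inside $\{z_1,\ldots,z_{k-1}\}$, this gives $k-1 \geq (\frac{n}{3}-1) + \frac{n}{3} + \frac{n}{3} = n-1$, contradicting $k < n$. Note that this count uses the outside vertex only through $C$ and needs only $\delta(G) \geq \frac{n}{3}$; something of this concrete, shifted-index disjointness flavour is what your sketch still has to supply before it can be called a proof.
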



\begin{proof}

\begin{figure}[htb]
	\centering
	\includegraphics[width=0.6\columnwidth, keepaspectratio]{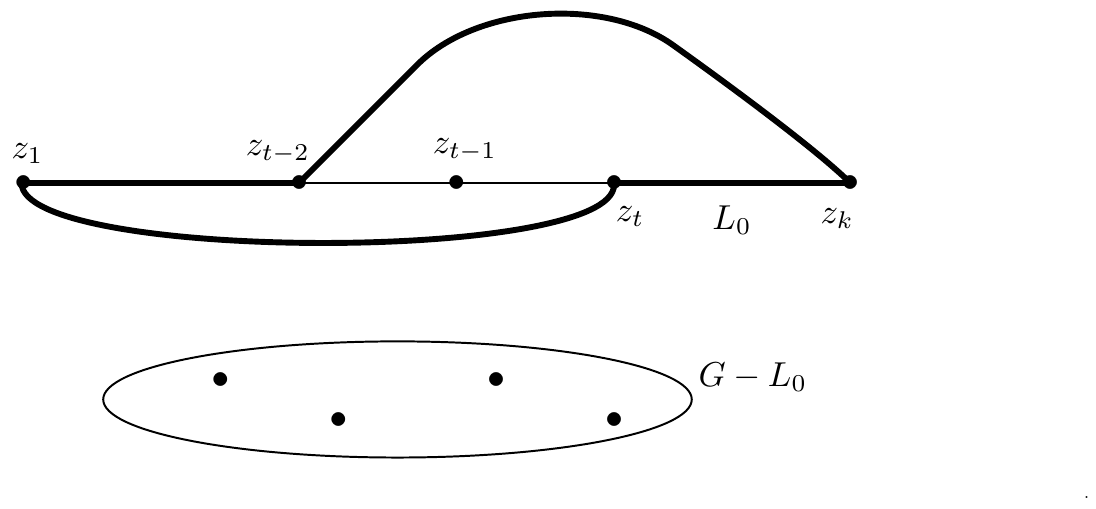}
	\caption{$z_{t} \in A$ and $z_{t - 2} \in B$ for some $t$.}
	\label{figuretwo}
\end{figure}

Consider the longest path $L_{0}$ in the graph $G$ such that its endpoints are not adjacent to each other and the vertex set beyond this path is independent. Assuming the converse, this is not a Hamiltonian path. Then there are $k < n$ vertices in $L_{0}$.  Let us number them in the order of passing the path: $z_{1}, z_{2}, ..., z_{k}$. We say that vertex $z_{i}$ is on the \textit{right} of vertex $z_{j}$ if $i > j$ and vertex $z_{i}$ is on the \textit{left} of vertex $z_{j}$ if $i < j$. Let $A = N_{G}(z_{1})$, $B = N_{G}(z_{k})$. It is clear that $A \cup B \subset L_{0}$ (otherwise, $L_{0}$ may be lengthened due to some vertex from $A \cup B \setminus L_{0}$). Then, since $d_{G}(z_{1}) \geqslant \frac{n}{3}$ and $d_{G}(z_{k}) \geqslant \frac{n}{3}$, $|A \cap L_{0}| \geqslant \frac{n}{3}$ and $|B \cap L_{0}| \geqslant \frac{n}{3}$.

Note that, for any $t$, the following property is satisfied: if $z_{t} \in A$ then $z_{t - 2} \notin B$. Assuming the converse, $z_{t} \in A$ and $z_{t - 2} \in B$ (see figure~\ref{figuretwo}). If $N_{G}(z_{t-1}) \subseteq L_{0}$ then note that there is a cycle of length $k - 1$ (see figure~\ref{figuretwo}). The vertex set beyond this cycle consists of the vertex set beyond $L_{0}$ (which is independent) and the vertex $z_{t - 1}$. Then, since $N_{G}(z_{t-1}) \subseteq L_{0}$, this cycle is suitable. Hence, $N_{G}(z_{t - 1}) \nsubseteq L_{0}$. Let $h \in N_{G}(z_{t - 1}) \cap (G - L_{0})$. Then there is a path of length $k + 1$ such that the vertex set beyond this path is independent: $z_{1}z_{2}...z_{t-2} z_{k}z_{k-1}z_{k-2}...z_{t}z_{t-1}h$ (see figure~\ref{figurethree}). This is a contradiction to the suggestion that $L_{0}$ is the longest path such that $V(G - L_{0})$ is independent. 


\begin{figure}[htb]
	\centering
	\includegraphics[width=0.6\columnwidth, keepaspectratio]{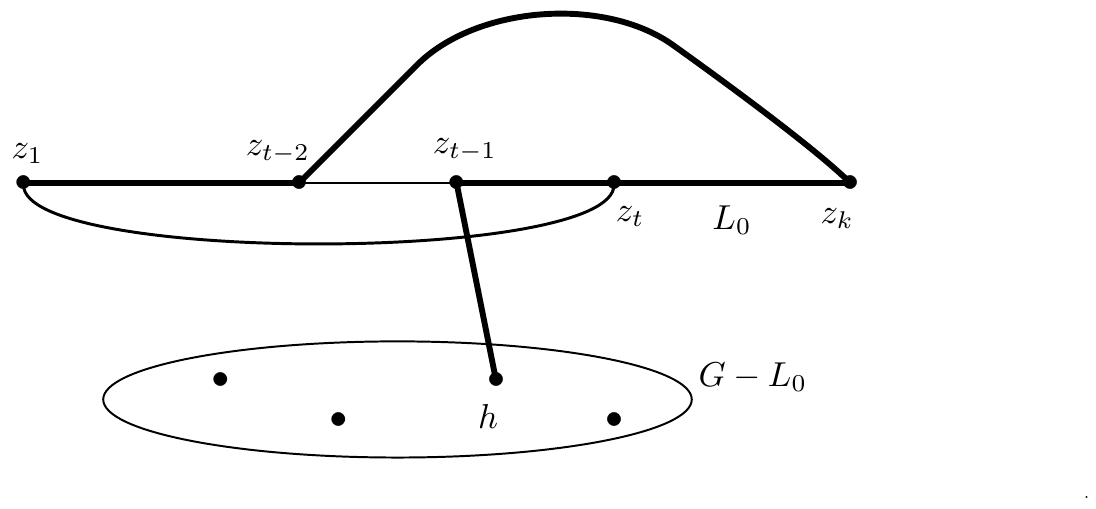}
	\caption{$z_{t} \in A$ and $z_{t - 2} \in B$ for some $t$, $N_{G}(z_{t - 1}) \cap (G - L_{0}) \neq \varnothing$.}
	\label{figurethree}
\end{figure}

Since the path $L_{0}$ is not Hamiltonian, $k < n$, and there exists a vertex $y \notin L_{0}$. Since $V(G - L_{0})$ is independent, $N_{G}(y) \subset V(L_{0})$. Let $\mu = |N_{G}(y)|$. Let $C$ be a set of all vertices $z_{t}$ such that $z_{t + 1}y \in E(G)$. Since $\delta (G) \geqslant \frac{n}{3}$, $\mu \geqslant \frac{n}{3}$ and $|C| \geqslant \mu$ (because $z_{1}$ cannot be a neighbour of $y$, otherwise the path $L_{0}$ can be obviously lengthened from $z_{1}...z_{k}$ to $yz_{1}...z_{k}$ with maintaining the condition about independence of the set of remaining vertices).

Note that $B \cap C = \varnothing$ (see figure~\ref{figurefour}).

\begin{figure}[htb]
	\centering
	\includegraphics[width=0.6\columnwidth, keepaspectratio]{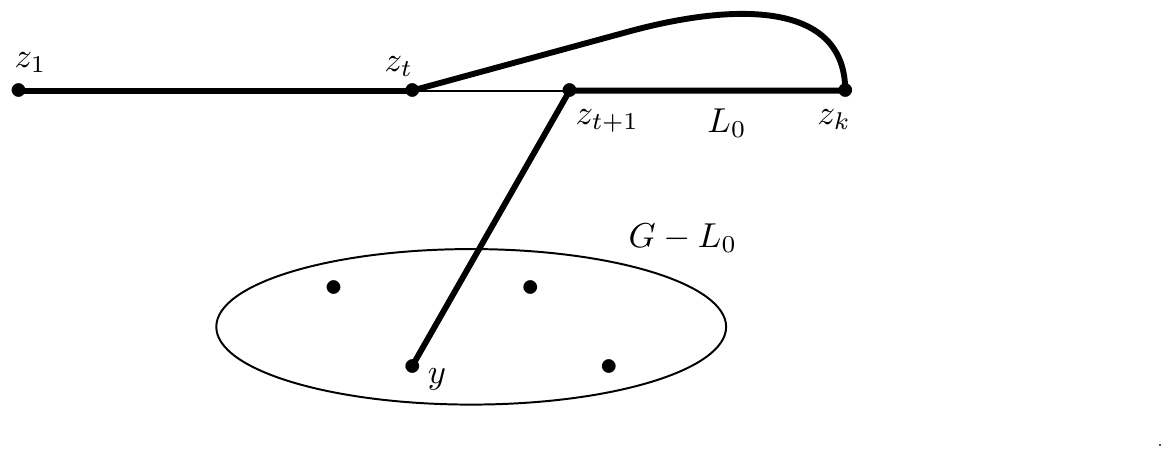}
	\caption{If $z_{t} \in B \cap C$ then $z_{t + 1}y, z_{t}z_{k} \in E(G)$. Then  the path (bold in the illustration) $z_{1}z_{2}...z_{t}z_{k}z_{k - 1}...z_{t + 1}y$ has length $k + 1$, and the vertex set beyond this path is independent, a contradiction to the choice of $L_{0}$.}
	\label{figurefour}
\end{figure}

Consequently, we proved the following:

1) $|A| \geqslant \frac{n}{3}$, $|B| \geqslant \frac{n}{3}$, $|C| \geqslant \frac{n}{3}$.

2) $B \cap C = \varnothing$.

3) If $z_{t} \in A$ then $z_{t - 2} \notin B$.

We say that $z_{t}$ is \textit{blocked} if $z_{t + 2} \in A$. It follows from the fact 3) that $\{\text{blocked vertices}\} \cap B = \varnothing$. Also, $\{\text{blocked vertices}\} \cap C = \varnothing$. Indeed, suppose that $z_{t}$ is blocked and $z_{t} \in C$. Then $z_{t + 1}y \in E(G)$ and $z_{1}z_{t + 2} \in E(G)$. Then the path $yz_{t + 1}z_{t}z_{t - 1}...z_{1}z_{t + 2}z_{t + 3}...z_{k}$ is longer than the path $L_{0}$, and the set of remaining vertices is independent (it is similar to what happens in figure~\ref{figurefour}). 

By definition, every vertex from $A$ (recall that $|A| \geqslant \frac{n}{3}$) \textit{generates} the blocked vertex except $z_{2}$ ($z_{1} \notin A$ because it is not neighbour to itself). Therefore, there are at least $\frac{n}{3} - 1$ blocked vertices. 

Thus, three sets \{blocked vertices\}, $B$, $C$ are pairwise disjoint and their cardinalities are at least $\frac{n}{3} - 1$, $\frac{n}{3}$, $\frac{n}{3}$ respectively. Since all these three sets are subsets of $\{z_{1}, ..., z_{k - 1}\}$ (obviously, $z_{k} \notin \{\text{blocked vertices}\} \cup B \cup C$), $k - 1 \geqslant \frac{n}{3} - 1 + \frac{n}{3} + \frac{n}{3} = n - 1$. But $k < n$ because the path $L_{0}$ is not Hamiltonian, a contradiction.

\end{proof}

By Claim~\ref{claimone}, $G$ has a Hamiltonian path. Let us number path's vertices in the order of passing the path: $x_{1}, x_{2}, ..., x_{n}$. As before, we say that $x_{i}$ is on the \textit{right} of $x_{j}$ if $i > j$ and $x_{i}$ is on the \textit{left} of $x_{j}$ if $i < j$. Since $d_G(x_{1}) \geqslant \frac{n}{3}$, there exists an index $i \geqslant \frac{n}{3} + 1$ such that $x_{1}x_{i} \in E(G)$. Vertices $x_{1}$, $x_{2}$, ..., $x_{i}$ form a cycle (in this order of passing) and all the remaining vertices $x_{i + 1}$, $x_{i + 2}$, ..., $x_{n}$ form a path (in this order of passing) (see figure~\ref{figurefive}). Hence, the graph $G$ contains a cycle of length at least $\frac{n}{3} + 1$ such that all the remaining vertices form a path.

\begin{figure}[htb]
	\centering
	\includegraphics[width=0.6\columnwidth, keepaspectratio]{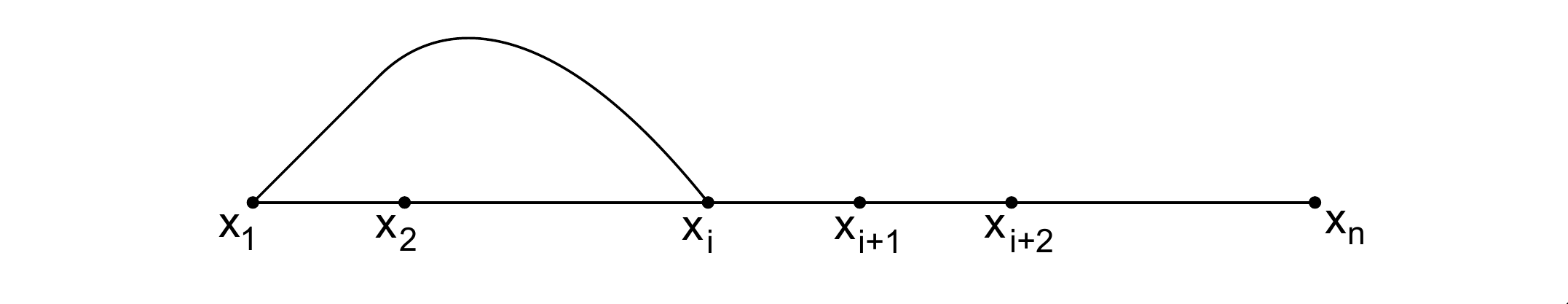}
	\caption{The graph $G$ has a cycle $x_{1}x_{2}...x_{i}$, all the remaining vertices form a path $x_{i + 1}x_{i + 2}...x_{n}$.}
	\label{figurefive}
\end{figure}

\begin{figure}[htb]
	\centering
	\includegraphics[width=0.6\columnwidth, keepaspectratio]{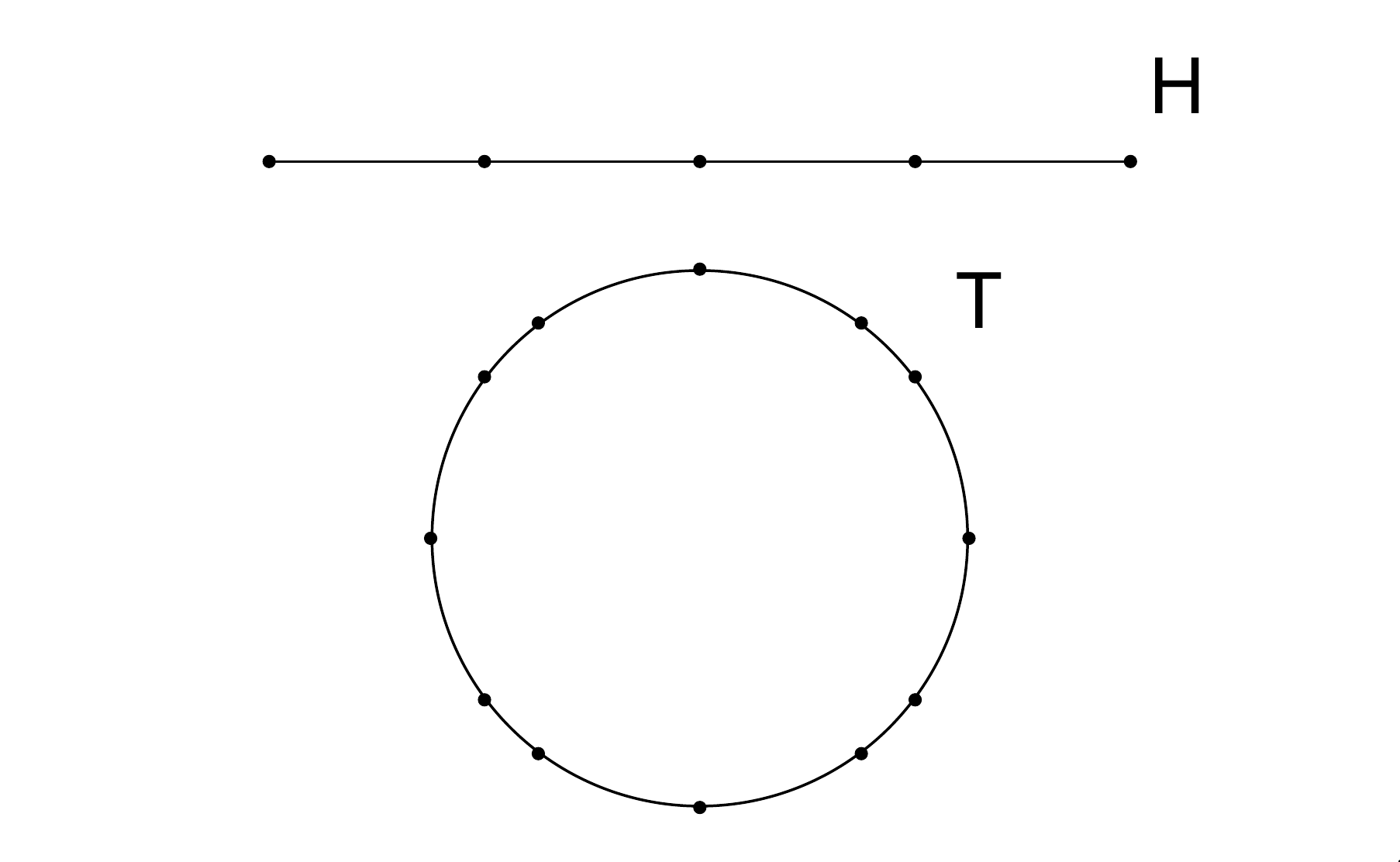}
	\caption{Vertices of the graph $G$ are divided into 2 disjoint sets: the cycle $T$ and the path $H$.}
	\label{figuresix}
\end{figure}

Let $r$ be the largest integer such that there exists a cycle of length $r$ (let this cycle be $T$) such that all the remaining vertices form a path (let this path be $H$). We proved that $r \geqslant \frac{n}{3} + 1$. The cycle $T$ has $r$ vertices, therefore the path $H$ has $n - r$ vertices and $n - r - 1$ edges (see figure~\ref{figuresix}).

\begin{claim} \label{claimtwo} $r \geqslant \frac{n}{2}$.

\end{claim}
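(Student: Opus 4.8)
The plan is to argue by contradiction, supposing $r < \frac{n}{2}$, and to exploit the structure of the two endpoints $x_{i+1}$ and $x_n$ of the path $H$ (using the notation of Figure~\ref{figuresix}; write $H = h_1 h_2 \dots h_{n-r}$ where $h_1$ is the end attached near the cycle and $h_{n-r}$ the far end). Since $\delta(G) \geq \frac{n}{3}$, each endpoint of $H$ has at least $\frac{n}{3}$ neighbours, and because $H$ has only $n-r > \frac{n}{2}$ vertices, a counting argument forces many of those neighbours to lie on $T$ or to create forbidden configurations. The key mechanism is the same path-rotation / cycle-enlargement idea already used twice in the proof of Claim~\ref{claimone}: if an endpoint of $H$ has two neighbours on $T$ in an appropriate cyclic position, or if both endpoints of $H$ send edges to $T$ in a compatible way, one can splice $H$ (or a large part of it) into $T$ to produce a strictly longer cycle whose complement is still a path, contradicting the maximality of $r$.

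Concretely, I would first show that the far endpoint $h_{n-r}$ cannot have two neighbours $h_p, h_{p+1}$ that are consecutive on $H$ with the vertex between them movable — otherwise a rotation lengthens the usable path, pushing vertices onto $T$. More importantly, I would analyze edges from $\{h_1, h_{n-r}\}$ into $V(T)$: if $h_1$ is adjacent to some $x_a \in T$ and $h_{n-r}$ is adjacent to some $x_b \in T$ with $x_a \neq x_b$, then traversing $T$ from $x_a$ to $x_b$ the short way and inserting the whole path $H$ yields a cycle on $r + (n-r) = n$ vertices, i.e. a Hamiltonian cycle — already a contradiction (a Hamiltonian cycle has empty, hence independent, complement). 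Therefore all neighbours of $h_{n-r}$ in $V(T)$ coincide in at most one vertex, and similarly for $h_1$; combined with $d_G(h_{n-r}) \geq \frac{n}{3}$ this forces $h_{n-r}$ to have at least $\frac{n}{3} - 1$ neighbours inside $V(H)$, which in turn (via Lemma~\ref{lemma} applied to $H$, or a direct rotation) produces a long cycle inside $G[V(H) \cup \{\text{one vertex of } T\}]$ that can be merged with the rest of $T$.

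The arithmetic I expect to be decisive: $H$ has $n - r$ vertices, and $n - r > \frac{n}{2} \geq r$, so $H$ is the "large" side. Applying Lemma~\ref{lemma}-type reasoning, the endpoints $h_1, h_{n-r}$ of this long path have degree sum at least $\frac{2n}{3}$, and only a bounded number of those edges can escape to $T$ without immediately closing a longer good cycle; hence they have degree sum at least roughly $\frac{2n}{3} - O(1)$ within $H$, which exceeds $|V(H)| = n - r$ as soon as $r$ is a bit more than $\frac{n}{3}$ — but we only know $r \geq \frac{n}{3}+1$, so this is where the sharper hypothesis $\delta(G) \geq \frac{n+2}{3}$ is likely consumed. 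Lemma~\ref{lemma} then gives a cycle through (almost) all of $H$, and attaching it to $T$ through a shared/adjacent vertex of $T$ contradicts maximality of $r$.

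The main obstacle I anticipate is bookkeeping the boundary between $H$ and $T$: the vertex $h_1$ is adjacent to the Hamiltonian-path-neighbour it inherited, and one must be careful that "inserting $H$ into $T$" genuinely yields a cycle plus a (possibly empty) path and not a more complicated subgraph — in particular handling the degenerate cases where $H$ is short, where the only neighbour of an endpoint on $T$ is a single fixed vertex, or where rotations of $H$ shuffle which vertex plays the role of $h_1$. Managing these endpoint cases cleanly, while keeping the "complement is a path" invariant, is the delicate part; the counting itself is routine once the admissible configurations are pinned down.
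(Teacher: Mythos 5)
Your central splicing step is wrong as stated, and the error matters. If an endpoint of $H$ is adjacent to $x_a\in V(T)$ and the other endpoint to $x_b\in V(T)$ with $x_a\neq x_b$, inserting $H$ between $x_a$ and $x_b$ along one arc of $T$ does \emph{not} give a cycle on $r+(n-r)=n$ vertices: the interior of the other arc of $T$ is omitted, so you get a Hamiltonian cycle only in the degenerate case where $x_a$ and $x_b$ are consecutive on $T$. The statement you actually need is weaker: under the assumption $r<\frac n2$ any two vertices of $T$ are at cycle-distance at most $\lfloor r/2\rfloor\le n-r$, so splicing $H$ into the longer arc yields a cycle of length greater than $r$ whose complement (the interior of the shorter arc) is a path, contradicting the maximality of $r$. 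You never make this distance observation, and it is not cosmetic: when $r\ge\frac n2$ the distance between the two attachment vertices can exceed $n-r$, which is precisely why the paper needs its elaborate Case 1 analysis later; your ``already a contradiction'' shortcut silently assumes away that issue. A second, independent gap: even after the (corrected) first step, the negation of ``two distinct attachment vertices exist'' does \emph{not} give that both endpoints of $H$ have at most one neighbour on $T$ — it allows one endpoint to have no neighbour on $T$ while the other has arbitrarily many. In that subcase $d_{G(V(H))}(a)+d_{G(V(H))}(b)$ can fall well below $n-r$, Lemma~\ref{lemma} does not apply, and your sketch offers no replacement; taming exactly this configuration is what the paper's Claims~\ref{claimthree}--\ref{claimfive} do, and those are proved \emph{after} Claim~\ref{claimtwo} and rely on it, so they cannot be invoked here. (Your closing ``merge with the rest of $T$'' step is also unnecessary where it does work: if $G(V(H))$ has a Hamiltonian cycle, its complement $V(T)$ already forms a path and $n-r>r$, contradicting the choice of $r$ directly.)

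For comparison, the paper proves Claim~\ref{claimtwo} by an entirely different and shorter route: it returns to the Hamiltonian path $x_1\dots x_n$ from Claim~\ref{claimone}, notes that under $r<\frac n2$ every neighbour $x_{\alpha_i}$ of $x_1$ satisfies $\alpha_i<\frac n2$ and every neighbour $x_{\beta_j}$ of $x_n$ satisfies $\beta_j>\frac n2$ (otherwise an initial or terminal segment is already a cycle of length at least $\frac n2$ with path complement), that $\alpha_i-1\neq\beta_j+1$ (else a suitable cycle on $n-1$ vertices exists), and that the middle vertex $x_{\lceil n/2\rceil}$ is adjacent to none of the $x_{\alpha_i-1}$ or $x_{\beta_j+1}$; these are at least $\frac{2n}{3}$ distinct vertices, forcing $d_G(x_{\lceil n/2\rceil})\le\frac n3-1$, a contradiction. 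Your cycle-plus-path approach could perhaps be repaired along the lines indicated above, but as written it has the two genuine gaps described.
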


\begin{proof}




Assuming the converse, $r < \frac{n}{2}$. Let $N_{G}(x_{1}) = \{x_{\alpha_{1}}, ..., x_{\alpha_{f}}\}$ and $N_{G}(x_{n}) = \{x_{\beta_{1}}, ..., x_{\beta_{g}}\}$. Since $\delta(G) \geqslant \frac{n}{3}$, $f \geqslant \frac{n}{3}$ and $g \geqslant \frac{n}{3}$. Note that, for any $1 \leqslant i \leqslant f$ and $1 \leqslant j \leqslant g$, property $\alpha_{i} - 1 \neq \beta_{j} + 1$ holds because otherwise there is a cycle of length $n - 1$ (it contains all the vertices except $x_{\alpha_{i} - 1} = x_{\beta_{j} + 1}$, that is: $x_{1}x_{2}...x_{\beta_{j}}x_{n}...x_{\alpha_{i}}$). This cycle is suitable because the vertex set beyond this cycle (that is, one vertex) is independent. Also note that, for any $1 \leqslant i \leqslant f$ and $1 \leqslant j \leqslant g$, inequalities $\alpha_{i} < \frac{n}{2}$ and $\beta_{j} > \frac{n}{2}$ hold (otherwise there is a suitable cycle of length at least $\frac{n}{2}$ such that all the remaining vertices form a path: $x_{1}x_{2}...x_{\alpha_{i}}$ or $x_{\beta_{j}}x_{\beta_{j} + 1}...x_{n}$). Consider the vertex $x_{\lceil \frac{n}{2}\rceil}$. We proved that it is on the right of all the neighbours of $x_{1}$ and it is on the left of all the neighbours of $x_{n}$. Note that, for any $1 \leqslant i \leqslant f$ and $1 \leqslant j \leqslant g$, properties $x_{\lceil \frac{n}{2}\rceil}x_{\alpha_{i} - 1} \notin E(G)$ and $x_{\lceil \frac{n}{2}\rceil}x_{\beta_{i} + 1} \notin E(G)$  hold (otherwise there is a cycle of length at least $\frac{n}{2}$ such that all the remaining vertices form a path: see figures~\ref{figureseven} a and ~\ref{figureseven} b respectively). Since all the vertices $x_{\alpha_{i} - 1}$ and $x_{\beta_{i} + 1}$ are distinct,  $x_{\lceil \frac{n}{2}\rceil}$ is not adjacent to at least  $\frac{2n}{3}$ distinct vertices and it is not adjacent to itself. Then, since $v(G) = n$, $d_{G}(x_{\lceil \frac{n}{2}\rceil}) \leqslant \frac{n}{3} - 1$, contrary to $\delta(G) \geqslant \frac{n}{3}$. 

\end{proof}

\begin{figure}[htb]
	\centering
	\includegraphics[width=1.2\columnwidth, keepaspectratio]{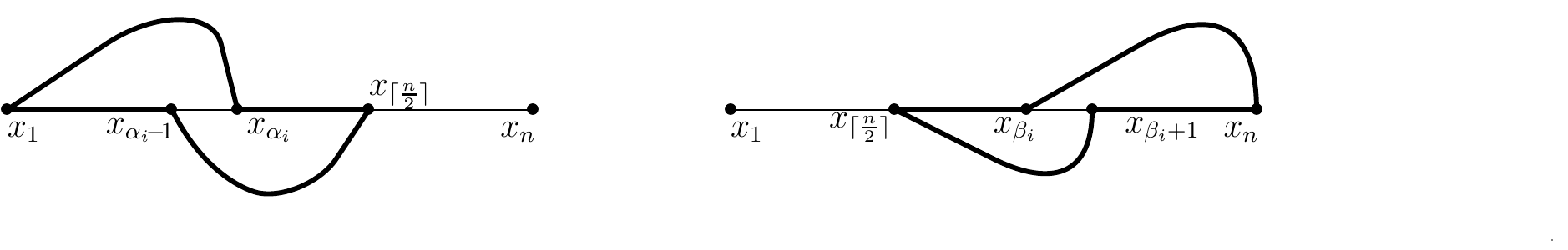}
	\caption{If $x_{\lceil \frac{n}{2}\rceil}x_{\alpha_{i} - 1} \in E(G)$ or $x_{\lceil \frac{n}{2}\rceil}x_{\beta_{i} + 1} \in E(G)$ then there is a cycle (bold in the figure) of length at least $\frac{n}{2}$ such that all the remaining vertices form a path.}
	\label{figureseven}
\end{figure}

Let us come back directly to the proof of Theorem~\ref{mytheorem} and consider 2 cases. 

$ $

\textbf{Case 1.} \textit{There are 2 distinct vertices of the cycle $T$ such that the first vertex is adjacent to some endpoint (let this endpoint be $a$) of the path $H$ and the second vertex is adjacent to the second endpoint (let this endpoint of the path $H$ be $b$).}

$ $

Let $\rho_{a} = |N_{G}(a) \cap V(T) \setminus N_{G}(b)|$, let vertices from $N_{G}(a) \cap V(T) \setminus N_{G}(b)$ be \textit{a-vertices} or \textit{vertices of type a}. Let $\rho_{b} = |N_{G}(b) \cap V(T) \setminus N_{G}(a)|$, let vertices from $N_{G}(b) \cap V(T) \setminus N_{G}(a)$ be \textit{b-vertices} or \textit{vertices of type b}. Also, let $\rho_{ab} = |N_{G}(a) \cap N_{G}(b) \cap V(T)|$, let vertices from $N_{G}(a) \cap N_{G}(b) \cap V(T)$ be \textit{ab-vertices} or \textit{vertices of type ab}). Note that every vertex of the cycle $T$ cannot pertain to 2 types simultaneously but it can pertain to none of the types (if it is not adjacent to $a$ and $b$). Note that two vertices of the same type cannot be neighbours in the order of passing the cycle (otherwise the cycle can be lengthened by one of the vertices $a$, $b$, and the remaining vertices form a path; or a Hamiltonian cycle exists, see figure~\ref{figureeight}).

\begin{figure}[htb]
	\centering
	\includegraphics[width=0.7\columnwidth, keepaspectratio]{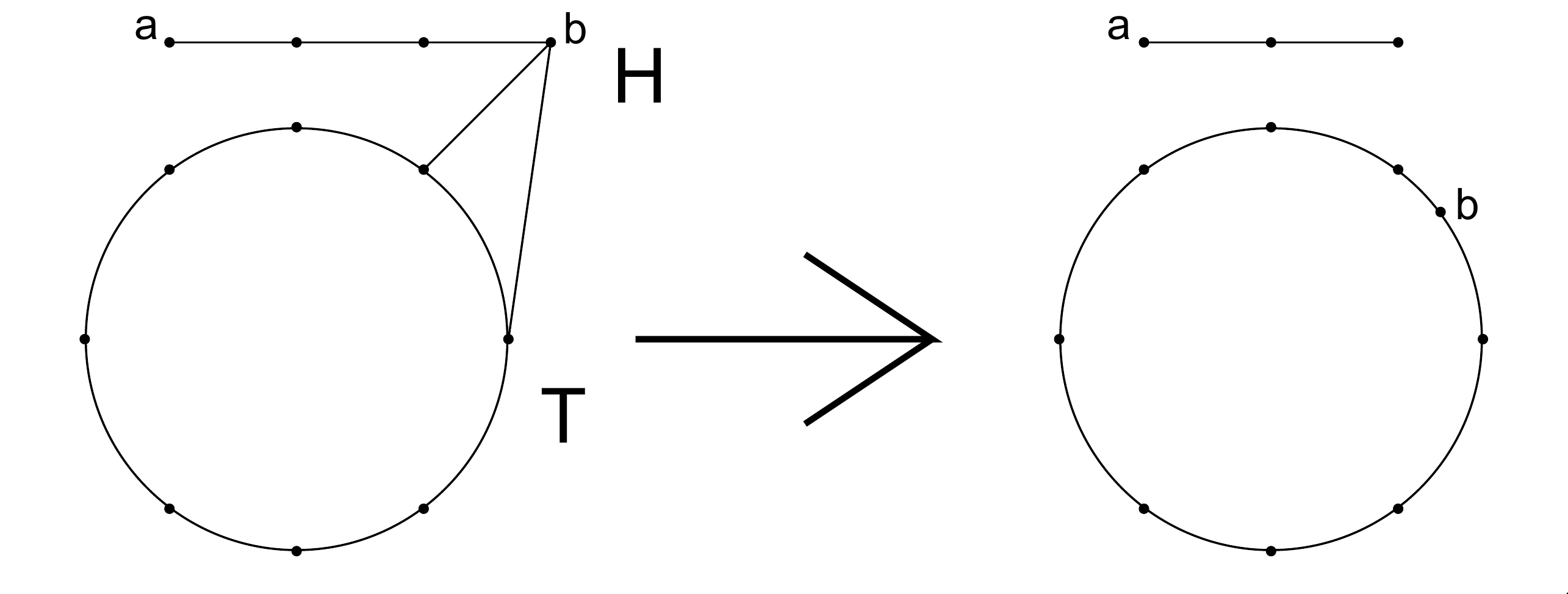}
	\caption{If an endpoint of $H$ ($b$ in this illustration) is adjacent to two neighbours in the order of passing the cycle then the cycle $T$ may be lengthened due to this endpoint and the remaining vertices form a path, a contradiction to the choice of the cycle $T$.}
	\label{figureeight}
\end{figure}

Note that there are no 2 distinct vertices of the cycle $T$ with three conditions:

1) first vertex is adjacent to $a$;

2) second vertex is adjacent to $b$;

3) the distance between these vertices by edges of the cycle $T$ is at most $n - r$.

Assume the converse. Note that there is a cycle (it passes the edges of the major half of the cycle $T$ between these vertices, endpoints of this section are adjacent to $a$ and $b$, and it passes the path $H$, see figure~\ref{figurenine}) such that the vertex set beyond this cycle forms a path and its cardinality is at most $n - r - 1$. Hence, there are at least $r + 1$ vertices in this cycle, contrary to the definition of $r$: we have found a cycle that satisfies all the requirements of the cycle in the definition of $r$, but this cycle is larger than the cycle $T$.

\begin{figure}[htb]
	\centering
	\includegraphics[width=0.35\columnwidth, keepaspectratio]{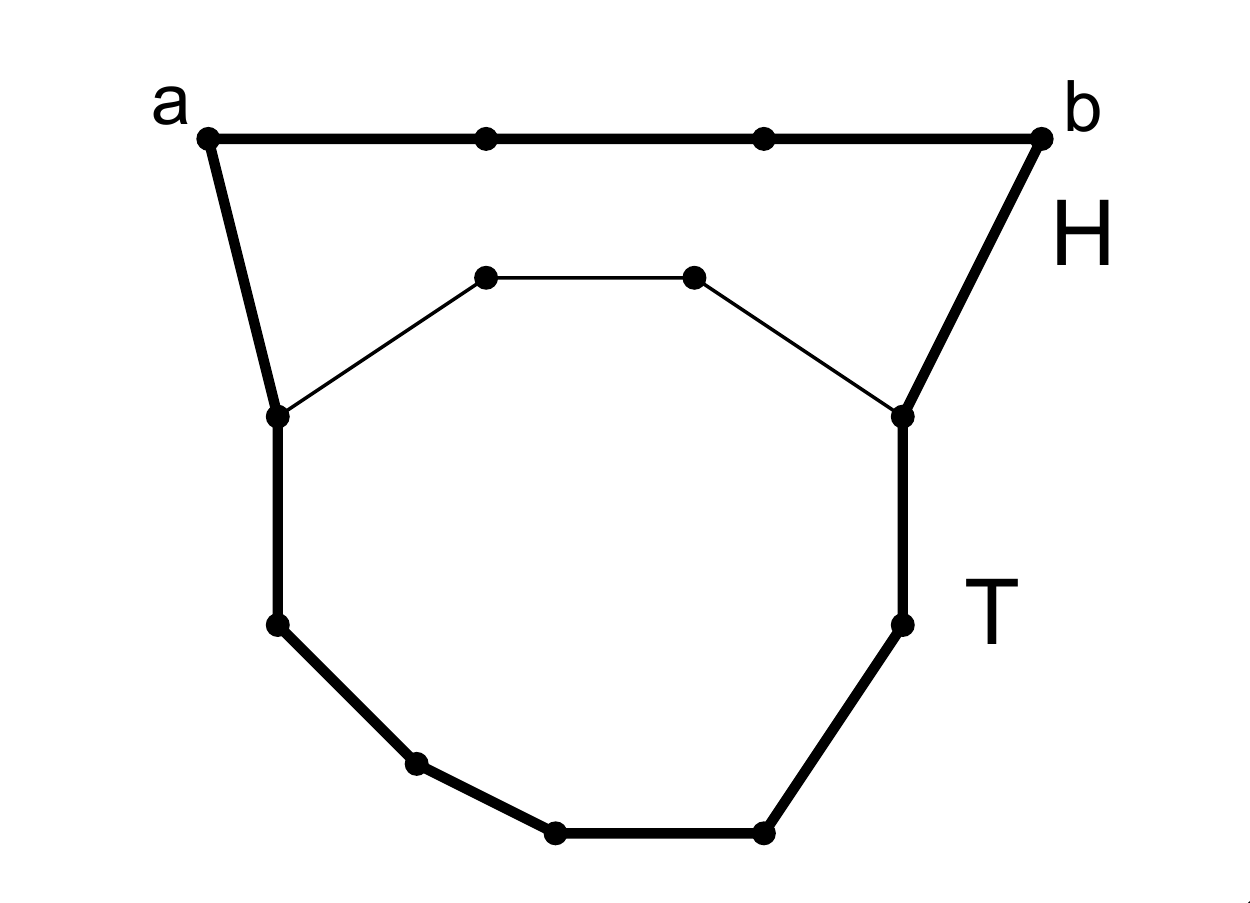}
	\caption{If one vertex of the cycle $T$ is adjacent to $a$, the other one is adjacent to $b$ and the distance between them by edges of the cycle $T$ is at most $n - r$ then there exists another cycle (bold in this figure), and its existence is contrary to the definition of $r$.}
	\label{figurenine}
\end{figure}

Therefore, if there are 2 distinct vertices of different types (or both of them have type $ab$) then the distance between them is at least $n - r + 1$. 


$ $

\textbf{Case 1.1.} \textit{The cycle T has neither a-vertices or b-vertices.}

$ $

By the condition of the case 1.1, $\rho_{a} = \rho_{b} = 0$. Then, by the condition of the case 1, $\rho_{ab} \geqslant 2$.

The vertex $a$ is adjacent to $\rho_{ab}$ vertices of the cycle $T$ and it may be adjacent to some vertices of the path $H$, not including itself. Consequently, the vertex $a$ is adjacent to at most $\rho_{ab} + n - r - 1$ vertices. Since $\delta(G) \geqslant \frac{n + 2}{3}$,  $\rho_{ab} + n - r - 1 \geqslant \frac{n + 2}{3} \Rightarrow \frac{2n - 5}{3} + \rho_{ab} \geqslant r$.

Since the distance between every $ab-$vertices by the edges of the cycle $T$ is at least $n - r + 1$, the cycle $T$ contains at least $\rho_{ab}(n - r + 1)$ edges. Then, since the cycle $T$ has $r$ edges, $r \geqslant \rho_{ab}(n - r + 1) \Rightarrow (\rho_{ab} + 1)r \geqslant \rho_{ab}(n + 1)$. By applying the estimate of $r$ from the previous paragraph, we obtain that 

$$(\rho_{ab} + 1)(\frac{2n - 5}{3} + \rho_{ab}) \geqslant \rho_{ab}(n + 1) \Rightarrow \frac{2n - 5}{3}\rho_{ab} + \frac{2n - 5}{3} + \rho_{ab}^{2} \geqslant n\rho_{ab} \Rightarrow $$

$$\Rightarrow 3\rho_{ab}^{2} - 5\rho_{ab} - 5 \geqslant (\rho_{ab} - 2)n \textit{.} $$

If $\rho_{ab} = 2$ then this inequality does not hold. Therefore, $\rho_{ab} \geqslant 3 \Rightarrow \rho_{ab} - 2 > 0$. Hence,

$$3\rho_{ab}^2 - 5\rho_{ab} - 5 \geqslant (\rho_{ab} - 2)n \Rightarrow \frac{3\rho_{ab}^2 - 5\rho_{ab} - 5}{\rho_{ab} - 2} \geqslant n \textit{.}$$

It is clear that  $$3\rho_{ab} + 1 > \frac{3\rho_{ab}^2 - 5\rho_{ab} - 5}{\rho_{ab} - 2}\textit{.} $$

Consequently,

$$3\rho_{ab} + 1 > n \Rightarrow \rho_{ab} > \frac{n - 1}{3}\textit{.} $$

$ $

Recall that $r \geqslant \rho_{ab}(n - r + 1)$. Note that $n - r + 1 \geqslant 0$ because otherwise $r = n$, and the cycle $T$ is Hamiltonian and therefore suitable. Then, since $\rho_{ab} > \frac{n - 1}{3}$, we get that $$r > \frac{n - 1}{3}(n - r + 1)\textit{.}$$

It is clear that $r \leqslant n$. Then $n - r + 1 > 0$. If $n - r + 1 = 1$ then $r = n$, and hence the cycle $T$ is Hamiltonian and therefore suitable. If $n - r + 1 = 2$ then $r = n - 1$, and hence the cycle $T$ does not contain only one vertex, so it is suitable too. If $n - r + 1 \geqslant 3$ then $r > \frac{n - 1}{3}(n - r + 1) \Rightarrow r > n - 1$. Thus, this case is handled.

$ $

\textbf{Case 1.2.} \textit{The cycle $T$ contains an a-vertex or a b-vertex.}

$ $

A \textit{section of type a} or an \textit{a-section} is a consecutive (in the order of passing the cycle $T$) vertex set such that, firstly, it consists of all the vertices between two a-vertices (hence, extreme vertices of an a-section are a-vertices), secondly, it cannot be lengthened, thirdly, it does not include vertices of other types (a-section can consist of one vertex). \textit{Sections of type b} or \textit{b-sections}, \textit{sections of type ab} or \textit{ab-sections} have analogous definitions. Edges of the cycle are naturally divided into at most 4 groups: edges in a-sections, edges in b-sections, edges in ab-sections and edges between 2 distinct sections. Let $\gamma_{a}$ be the number of $a-$sections, let $\gamma_{b}$ be the number of $b-$sections, let $\gamma_{ab}$ be the number of $ab-$sections. By the conditions of the cases 1 and 1.1, there are at least 2 positive integers among three numbers $\rho_{a}, \rho_{b}, \rho_{ab}$ (respectively, there are at least 2 positive integers among three numbers $\gamma_{a}, \gamma_{b}, \gamma_{ab}$). Hence, $\gamma_{a} + \gamma_{b} + \gamma_{ab} \geq 2$. Consequently, the cycle $T$ does not consist of only one section.

It follows from the observation before the case 1.1 that there are at least $n - r + 1$ edges (in the order of passing the cycle $T$) between 2 distinct sections. Also, there are at least $n - r + 1$ edges (in the order of passing the cycle $T$) between 2 distinct ab-vertices in an ab-section. Let us consider all the edges of the cycle $T$ that does not constitute edges from a-sections and b-sections. For this purpose, we fix some direction of the cycle $T$. Walking around the cycle $T$ in this direction, there are at least $n - r + 1$ edges between distinct sections after passing an a-section or a b-section. Also, there are at least $n - r + 1$ edges between distinct sections or in ab-section after an ab-vertex. Thus, there are at least $(n - r + 1)(\rho_{ab} + \gamma_{a} + \gamma_{b})$ edges from the cycle $T$ that do not constitute edges inside a-section or b-section.

Let us consider any a-section. Let it be $s$ a-vertices in this section. It is clear that if $s = 1$ then this section consists of at least $2s - 2$ edges. Since an a-vertex does not have a-vertices and b-vertices among its 2 neighbours in the order of passing the cycle (figure~\ref{figureeight} is dedicated to that), if $s \geqslant 2$ then this section has at least $2s - 1$ edges. Hence, any $s$ a-section has at least $2s - 2$ edges. Therefore, there are at least $2\rho_{a} - 2\gamma_{a}$ edges inside a-sections. Similarly, there are at least $2\rho_{b} - 2\gamma_{b}$ edges inside b-sections. Hence, there are at least $2(\rho_{a} + \rho_{b}) - 2(\gamma_{a} + \gamma_{b})$ edges inside a-sections and b-sections.  

Thus, there are at least  $(n - r + 1)(\gamma_{a} + \gamma_{b} + \rho_{ab}) + 2(\rho_{a} + \rho_{b}) - 2(\gamma_{a} + \gamma_{b})$ edges in the cycle $T$. Since the cycle $T$ consists of $r$ edges, the following inequality holds:

$$r \geqslant (n - r + 1)(\gamma_{a} + \gamma_{b} + \rho_{ab}) + 2(\rho_{a} + \rho_{b}) - 2(\gamma_{a} + \gamma_{b}) \geqslant $$

$$ \geqslant (n - r - 2)(\gamma_{a} + \gamma_{b} + \rho_{ab}) + 2(\rho_{a} + \rho_{b}) + 3\rho_{ab}\textit{.}$$

It follows from $\gamma_{a} + \gamma_{b} + \gamma_{ab} \geqslant 2$ and $\rho_{ab} \geqslant \gamma_{ab}$ that $\gamma_{a} + \gamma_{b} + \rho_{ab} \geqslant 2$. Also, $n - r - 2 \geqslant 0$, because otherwise $r = n$ or $r = n - 1$, these cases are handled before the case 1.2. Consequently,

$$r \geqslant 2(n - r - 2) + 2(\rho_{a} + \rho_{b}) + 3\rho_{ab} \Rightarrow 3r \geqslant 2(n - 2) + 2(\rho_{a} + \rho_{b}) + 3\rho_{ab} \Rightarrow $$

$$\Rightarrow r \geqslant \frac{2}{3}(\rho_{a} + \rho_{b}) + \frac{2}{3}n - \frac{4}{3} + \rho_{ab}\textit{.}$$

The vertex $a$ has $\rho_{a} + \rho_{ab}$ neighbours among vertices of the cycle $T$. Also, it can be adjacent to all the vertices of the path $H$ except itself. Therefore, the vertex $a$ is adjacent to at most $n - r - 1 + \rho_{a} + \rho_{ab}$ vertices. On the other hand, $d_{G}(a) \geqslant \frac{n + 2}{3}$. Therefore, $\frac{n + 2}{3} \leqslant n - r - 1 + \rho_{a} + \rho_{ab}$. The same is true for the vertex $b$. Hence, $\frac{n + 2}{3} \leqslant n - r - 1 + \rho_{b} + \rho_{ab}$. Consequently, $\frac{n + 2}{3} \leqslant n - r - 1 + min(\rho_{a}, \rho_{b}) + \rho_{ab}$. Thus,

$$ r \leqslant \frac{2n - 5}{3} + min(\rho_{a}, \rho_{b}) + \rho_{ab}\textit{.}$$

It follows from this and the previous inequalities that

$$\frac{2n - 5}{3} + min(\rho_{a}, \rho_{b}) + \rho_{ab} \geqslant \frac{2}{3}(\rho_{a} + \rho_{b}) + \frac{2}{3}n - \frac{4}{3} + \rho_{ab} \Rightarrow $$

$$\Rightarrow min(\rho_{a}, \rho_{b}) \geqslant \frac{2}{3}(\rho_{a} + \rho_{b}) + \frac{1}{3} \textit{.}$$

Then it follows from $\frac{2}{3}(\rho_{a} + \rho_{b}) \geqslant \frac{4}{3}min(\rho_{a}, \rho_{b})$ that

$$ min(\rho_{a}, \rho_{b}) \geqslant \frac{4}{3}min(\rho_{a}, \rho_{b}) + \frac{1}{3} \Rightarrow 0 \geqslant \frac{1}{3}min(\rho_{a}, \rho_{b}) + \frac{1}{3}\textit{.}$$

This is impossible, a contradiction.

$ $

\textbf{Case 2.} \textit{There are no 2 distinct vertices of the cycle $T$ such that one of them is adjacent to $a$ and the second vertex is adjacent to $b$.}

$ $

In this case, one of the vertices $a$, $b$ has at most 1 neighbour among vertices of the cycle $T$. Without loss of generality, $a$ has at most 1 neighbour among vertices of the cycle $T$. Since $d_{G}(a) \geqslant \frac{n}{3}$, $a$ has at least $\frac{n}{3} - 1$ neighbours among vertices of the path $H$. Hence, the path $H$ contains (including $a$) at least $\frac{n}{3} - 1 + 1 = \frac{n}{3}$ vertices. Then, since the path $H$ has $n - r$ vertices, $$n - r \geqslant \frac{n}{3} \Rightarrow r \leqslant \frac{2n}{3}\textit{.} \eqno(2)$$

\begin{claim} \label{claimthree} a) Any endpoint of $H$ has at most $\frac{r}{3}$ neighbours among vertices of the cycle $T$. 

b) If the graph $G(H)$ is Hamiltonian then any vertex of the graph $G(H)$ has at most $\frac{r}{3}$ neighbours among vertices of the cycle $T$. 

\end{claim}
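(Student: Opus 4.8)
The plan is to prove both parts by showing that an endpoint of a path complement of $T$ cannot have ``close'' neighbours on $T$, so that its neighbours on $T$ are forced to be spread out along the cycle. For part~a) I would fix an endpoint $v$ of $H$ and put $k=|N_G(v)\cap V(T)|$; the goal is $k\le r/3$. If $k\le 1$ this is immediate because $r\ge n/2\ge 3$, so I may assume $k\ge 2$, list the neighbours of $v$ on $T$ in cyclic order, and cut $T$ into $k$ arcs whose lengths sum to $r$. Hence it suffices to show that every such arc has at least $3$ edges, since then $r\ge 3k$.

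Arcs of length $1$ are impossible: if $v$ is adjacent to two vertices consecutive on $T$, inserting $v$ between them produces a cycle of length $r+1$ whose complement is $V(H)\setminus\{v\}$, a subpath of $H$, contradicting the maximality of $r$. The heart of the argument is ruling out arcs of length $2$. Suppose $v\sim t_i$ and $v\sim t_{i+2}$ are consecutive neighbours of $v$ on $T$ while the middle vertex $w:=t_{i+1}$ satisfies $v\not\sim w$. The key step is: $w$ has no neighbour in $V(H)$. Indeed, if $w\sim z$ for some $z\in V(H)$, I form the cycle that starts at $z$, goes to $w$, runs once around all of $T$ ending at $t_i$ (using the $T$-edges $w t_{i+2},t_{i+2}t_{i+3},\dots,t_{i-1}t_i$), jumps to $v$ along $v t_i$, and then follows $H$ from $v$ back to $z$. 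This cycle contains all of $V(T)$ together with the $v$--$z$ segment of $H$ (which has at least two vertices, since $z\ne v$), so it has length at least $r+2$ (or is Hamiltonian if $z$ is the other endpoint of $H$), while its complement is the remaining segment of $H$, a path — contradicting the maximality of $r$, or the standing assumption that $G$ has no suitable cycle.

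So $N_G(w)\subseteq V(T)\setminus\{w\}$. Next I form the cycle $C_0$ on $(V(T)\setminus\{w\})\cup\{v\}$ obtained from $T$ by replacing the subpath $t_i-w-t_{i+2}$ by $t_i-v-t_{i+2}$, which is legal since $v\sim t_i$ and $v\sim t_{i+2}$. If $w$ had two neighbours consecutive along the path $T-w$, reinserting $w$ into the corresponding edge of $C_0$ would give a cycle on $V(T)\cup\{v\}$ of length $r+1$ whose complement $V(H)\setminus\{v\}$ is a path, a contradiction. Hence $N_G(w)$ is an independent set of the path $T-w$ on $r-1$ vertices, so $d_G(w)\le\lceil (r-1)/2\rceil\le r/2$; since we are in Case~2 we have $r\le \frac{2n}{3}$, whence $d_G(w)\le\frac{n}{3}<\frac{n+2}{3}\le\delta(G)$, a contradiction. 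This eliminates arcs of length $2$ and finishes part~a). For part~b), assume $G(H)$ is Hamiltonian and fix any $w\in V(H)$; deleting one of the two edges at $w$ from a Hamiltonian cycle of $G(H)$ yields a Hamiltonian path $H'$ of $G(H)$ with $w$ as an endpoint, and $(T,H')$ is again a cycle of length $r$ together with a path on the complementary vertex set, so $r$ remains the maximum length of such a cycle and the bound $r\le\frac{2n}{3}$ still holds. Applying the argument of part~a) to $(T,H')$ with $v:=w$ gives that $w$ has at most $r/3$ neighbours on $T$, and since $w$ was arbitrary, part~b) follows.

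I expect the main obstacle to be the key step of the second paragraph — spotting that the ``wrap-around'' cycle through $z$, all of $T$, $v$, and a segment of $H$ is simultaneously long enough and has a path as its complement. Everything afterwards (the reinsertion move and the degree count) is routine, but it genuinely uses $r\le\frac{2n}{3}$, which is precisely why the claim belongs inside Case~2.
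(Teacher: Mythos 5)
Your proof is correct and follows essentially the same route as the paper: neighbours of an endpoint on $T$ cannot lie at cyclic distance $1$ (insert the endpoint into $T$) or $2$ (the middle vertex would have all its neighbours on $T$ with no two consecutive, forcing degree at most $r/2 \leqslant n/3 < \delta(G)$), hence there are at most $r/3$ of them, and part b) follows by re-choosing the Hamiltonian path of $G(V(H))$ so that the given vertex becomes an endpoint. The only difference is cosmetic: you handle both endpoints uniformly, while the paper dispatches the endpoint $a$ trivially from the Case 2 condition that it has at most one neighbour on $T$ and runs the real argument only for $b$.
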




\begin{proof}

a) Firstly, let us prove that for the vertex $a$. Assuming the converse, $a$ has more than $\frac{r}{3}$ neighbours among vertices of the cycle $T$. Since $a$ has at most one neighbour among vertices of the cycle $T$, $1 > \frac{r}{3} \Rightarrow r < 3$. By Claim~\ref{claimtwo}, $r \geqslant \frac{n}{2}$. Hence, $3 > \frac{n}{2} \Rightarrow n < 6$, contrary to (1).

\begin{figure}[htb]
	\centering
	\includegraphics[width=0.4\columnwidth, keepaspectratio]{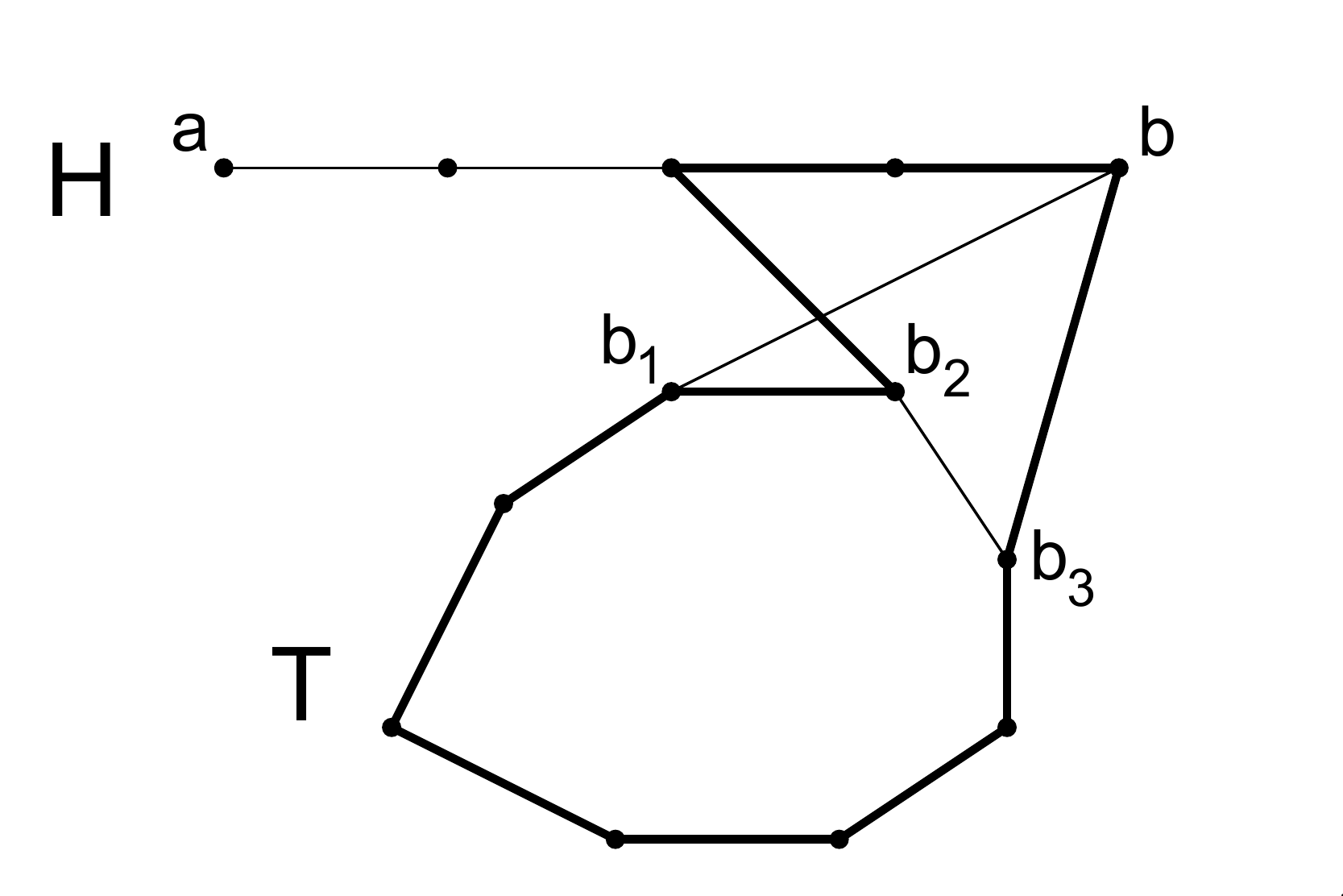}
	\caption{$b_{2}$ cannot be adjacent to some vertex of the path $H$, because otherwise there is a cycle (bold in the illustration), and its existence is contrary to the choice of the cycle $T$.}
	\label{figureten}
\end{figure}

It remains to prove that for the vertex $b$. Let us consider $k_{b}$ neighbours of the vertex $b$ in the cycle $T$ (keeping terminology, they are \textit{vertices of the type $b$} or \textit{b-vertices}). Note that b-vertices cannot be neighbours in the order of passing the cycle, because otherwise the cycle $T$ may be lengthened due to the vertex $b$, and the remaining vertices form a path (this is illustrated in figure~\ref{figureeight}). 

Assume that there are two b-vertices (we denote them by $b_{1}$ and $b_{3}$) such that they have exactly one vertex (we denote it by $b_{2}$) between them in the order of passing the cycle $T$. We proved that $bb_{2} \notin E(G)$. Note that $e_{G}(b_{2}, H) = 0$, because otherwise there is a cycle that is larger than $T$, and the remaining vertices form a path (see figure~\ref{figureten}). Hence, $N_{G}(b_{2}) \subset T$. Note that if two neighbours of $b_{2}$ are adjacent by the edge of the cycle $T$ then $b_{2}$ may be 'inserted' between these neighbours, and hence there is a cycle that includes all the vertices of the cycle $T$ and the vertex $b$ (hence, it is larger than $T$), and the vertex set beyond this cycle forms a path (see figure~\ref{figureeleven}). Hence, there are no two neighbours of $b_{2}$ that are neighbours in the order of passing the cycle $T$. Since the cycle $T$ consists of $r - 1$ vertices except $b_{2}$, $b_{2}$ is adjacent to at most $\lceil \frac{r - 1}{2}\rceil$ vertices. Thus, $\lceil \frac{r - 1}{2}\rceil \geqslant \frac{n + 2}{3} \Rightarrow \frac{r}{2} \geqslant \frac{n + 2}{3} \Rightarrow r \geqslant \frac{2(n + 2)}{3}$ because $d_{G}(b_{2}) \geqslant \frac{n + 2}{3}$. This is a contradiction to (2). 

\begin{figure}[htb]
	\centering
	\includegraphics[width=0.45\columnwidth, keepaspectratio]{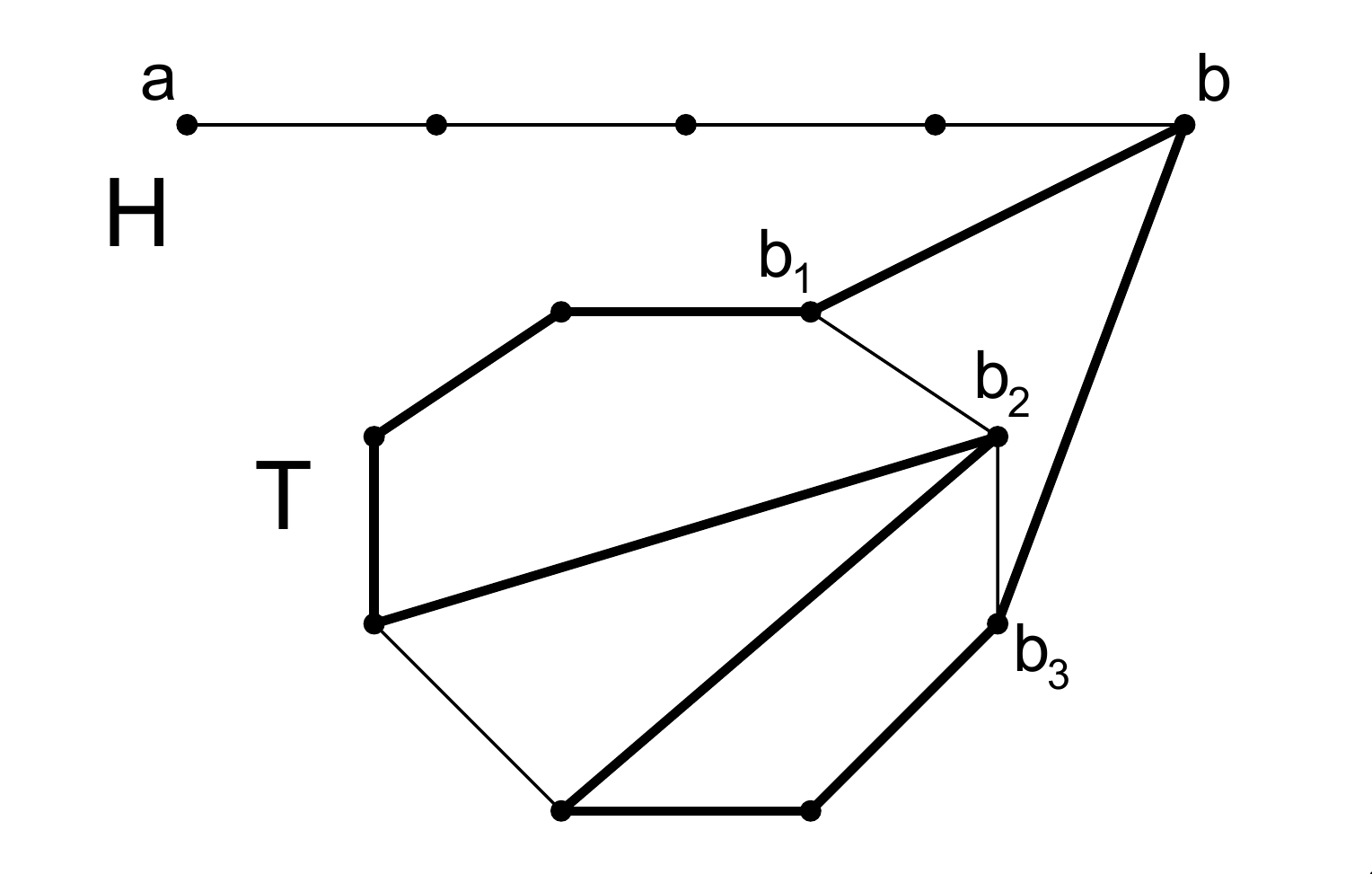}
	\caption{If $b_{2}$ is adjacent to two vertices of the cycle $T$ that are adjacent to each other by an edge of the cycle $T$ then there is a cycle (bold in this figure), and its existence is contrary to the choice of the cycle $T$.}
	\label{figureeleven}
\end{figure}

Therefore, there are no two b-vertices in the cycle $T$ that are adjacent to each other by an edge of the cycle $T$ or have exactly one vertex between them in the order of passing the cycle $T$. Consequently, $b$ has at most $\frac{r}{3}$ neighbours among vertices of the cycle $T$ because $T$ has $r$ vertices.

\vspace{3.5mm}

b) If the graph $G(H)$ is Hamiltonian then any vertex of the path $H$ may be an endpoint of some Hamiltonian path in the graph $G(V(H))$. Applying Claim~\ref{claimthree} (item a) to this vertex and this Hamiltonian path, we obtain the desired property. 

\end{proof}

\begin{figure}[htb]
	\centering
	\includegraphics[width=0.35\columnwidth, keepaspectratio]{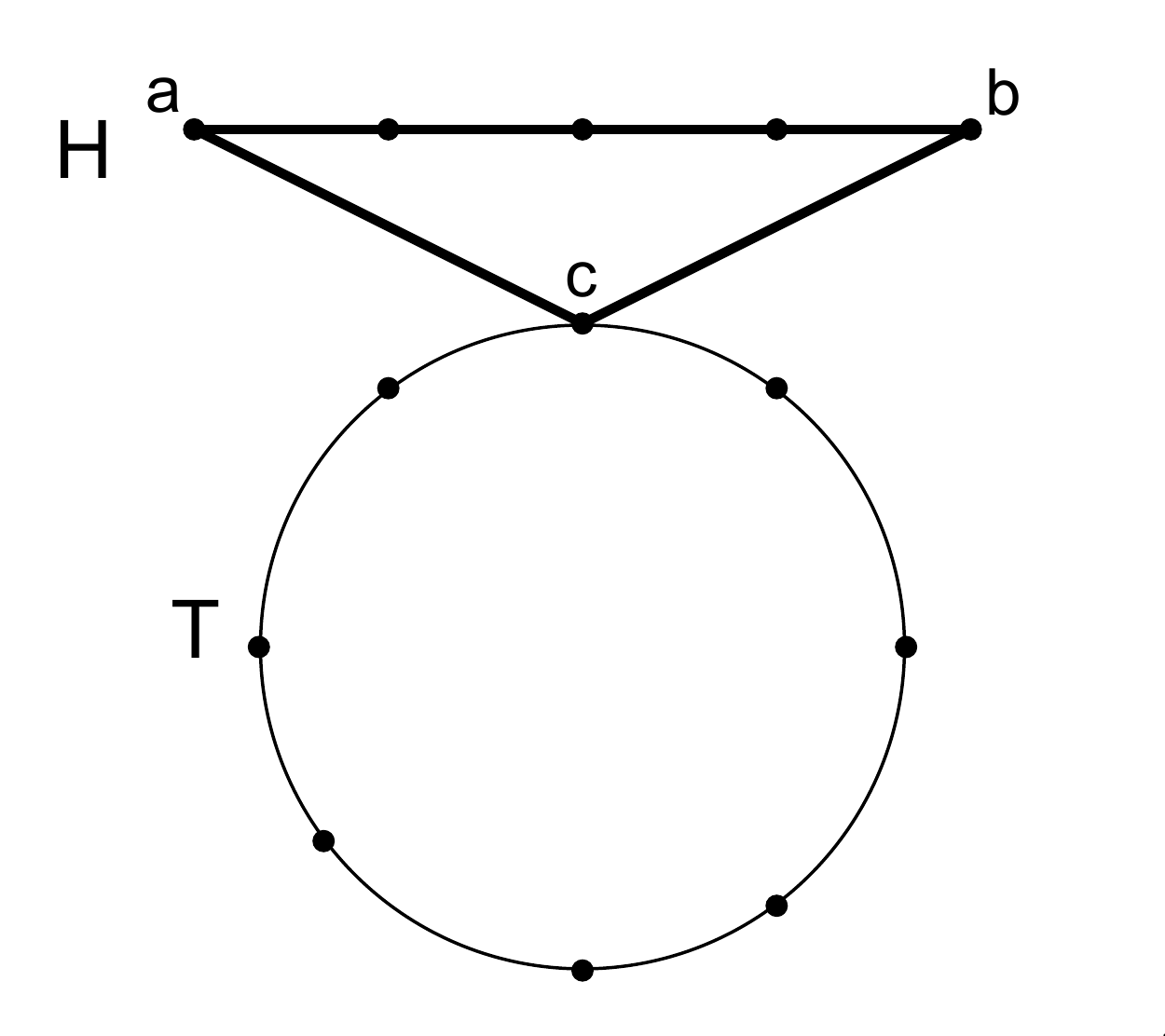}
	\caption{The path $H$ and the vertex $c$ form a cycle (bold in this illustration) such that the vertex set beyond this cycle forms a path.}
	\label{figuretwelve}
\end{figure}

\begin{claim} \label{claimfour} $G(V(H))$ has a Hamiltonian cycle.

\end{claim}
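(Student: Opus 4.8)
The plan is to upgrade the Hamiltonian path $H$ of $G(V(H))$ to a Hamiltonian cycle by applying the classical Lemma~\ref{lemma} to the induced graph $G(V(H))$. First I would record that $H = x_{i+1}x_{i+2}\ldots x_n$ is a path on $n-r$ vertices that is, trivially, of maximal length in $G(V(H))$, and that its endpoints are $a$ and $b$. So it suffices to verify the two hypotheses of Lemma~\ref{lemma} for $m = n - r$: that $n - r > 2$, and that $d_{G(V(H))}(a) + d_{G(V(H))}(b) \geqslant n - r$.

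For $n - r > 2$: in Case~2 the vertex $a$ has at most one neighbour on $T$ and at most $n - r - 1$ neighbours on $H$, so $\frac{n+2}{3} \leqslant d_G(a) \leqslant n - r$; together with $n \geqslant 6$ from~(1) this yields $n - r \geqslant \frac{n+2}{3} > 2$.

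For the degree sum I would bound the two endpoints separately. By the hypothesis of Case~2 the vertex $a$ has at most one neighbour on $V(T)$, hence at least $\frac{n+2}{3} - 1 = \frac{n-1}{3}$ neighbours in $V(H)$; and applying Claim~\ref{claimthree} (item a) to the endpoint $b$ of $H$, the vertex $b$ has at most $\frac{r}{3}$ neighbours on $V(T)$, hence at least $\frac{n+2}{3} - \frac{r}{3}$ neighbours in $V(H)$. Adding, $d_{G(V(H))}(a) + d_{G(V(H))}(b) \geqslant \frac{2n + 1 - r}{3}$, which is $\geqslant n - r$ exactly when $2r \geqslant n - 1$; this holds since $r \geqslant \frac{n}{2}$ by Claim~\ref{claimtwo}. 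Lemma~\ref{lemma} then produces a cycle of length $n - r$ in $G(V(H))$, which is the desired Hamiltonian cycle of $G(V(H))$.

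The only real obstacle is obtaining a usable upper bound on $|N_G(b) \cap V(T)|$: the trivial bound is worthless, and it is precisely Claim~\ref{claimthree} (item a) that furnishes the needed $\frac{r}{3}$; the rest is a short degree count, the one point of care being to confirm $n - r > 2$ so that Lemma~\ref{lemma} may be invoked. Note that this is one of the places where the stronger hypothesis $\delta(G) \geqslant \frac{n+2}{3}$, rather than $\delta(G) \geqslant \frac{n}{3}$, is genuinely needed: with the weaker bound the degree sum would only reach $\frac{2n-3-r}{3}$, which is not enough.
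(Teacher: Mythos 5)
Your proof is correct, and each step checks out: $H$ is a Hamiltonian path of $G(V(H))$ with endpoints $a$ and $b$, hence trivially of maximal length there, so Lemma~\ref{lemma} applies once you have $n-r>2$ and the degree-sum bound; the estimate $d_{G(V(H))}(a)\geq\frac{n+2}{3}-1$ uses the Case-2 normalization that $a$ has at most one neighbour on $T$, the estimate $d_{G(V(H))}(b)\geq\frac{n+2}{3}-\frac{r}{3}$ is exactly Claim~\ref{claimthree}(a) (which is proved before Claim~\ref{claimfour}, so there is no circularity), the required inequality $\frac{2n+1-r}{3}\geq n-r$ reduces to $2r\geq n-1$, which Claim~\ref{claimtwo} gives with room to spare, and $n-r\geq\frac{n+2}{3}>2$ follows from (1). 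Your route is leaner than the paper's: the paper splits on whether both $a$ and $b$ meet $T$ (its Case a) or one of them does not (its Case b). Its Case b is essentially your computation, with $\frac{n}{3}$ in place of $\frac{n+2}{3}$ for $a$; but in Case a it only uses $d_{G(H)}(a),d_{G(H)}(b)\geq\frac{n}{3}-1$, and when Lemma~\ref{lemma} fails it must extract extra structure --- the common neighbour $c$, the cycle formed by $H$ and $c$ forcing $r\geq\frac{n+1}{2}$, and finally a hand analysis of the residual configuration $n=7$, $r=4$ with two $4$-cycles sharing $c$. By spending the full hypothesis $\delta\geq\frac{n+2}{3}$ on $a$ and Claim~\ref{claimthree}(a) on $b$, you fold Case a into the same single degree count and eliminate the $n=7$ analysis entirely; the only trade-off, which you correctly flag, is that your argument genuinely needs the $+2$ slack, whereas the paper's case split was arranged to lean on $\frac{n}{3}$ wherever it could.
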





\begin{proof} Let us consider 2 cases.

$ $

\textbf{Case a:} \textit{$e_{G}(a, T) \geqslant 1$ and $e_{G}(b, T) \geqslant 1$.}

$ $

By the condition of the case 2, $e_{G}(a, T) = 1$, $e_{G}(b, T) = 1$ and $|N_{G}(\{a, b\}) \cap T| = 1$. Let $\{c\} = N_{G}(\{a, b\}) \cap T$, $ca, cb \in E(G)$. Consider subgraph $G(H)$. Since $a$ and $b$ have one neighbour in the cycle $T$, $d_{G(H)}(a) \geqslant \frac{n}{3} - 1$, $d_{G(H)}(b) \geqslant \frac{n}{3} - 1$. The graph $G(H)$ has a Hamiltonian path $H$. Applying Lemma~\ref{lemma} to the path $H$ that has $n - r$ vertices: if $d_{G(H)}(a) + d_{G(H)}(b) \geqslant n - r$ then graph $G(H)$ has a Hamiltonian cycle (this is the desired property). Otherwise,  $n - r > d_{G(H)}(a) + d_{G(H)}(b) \geqslant \frac{n}{3} - 1 + \frac{n}{3} - 1 = \frac{2n}{3} - 2 \Rightarrow r < \frac{n}{3} + 2$. Note that the path $H$ and the vertex $c$ form a cycle on $n - r + 1$ vertices such that the vertex set beyond this cycle forms a path (see figure~\ref{figuretwelve}). Then, by the definition of $r$, the following holds: $r \geqslant n - r + 1 \Rightarrow r \geqslant \frac{n + 1}{2}$. If $r > \frac{n + 1}{2}$ then $r \geqslant \frac{n}{2} + 1$. Then it follows from $r < \frac{n}{3} + 2$ that $\frac{n}{3} + 2 > \frac{n}{2} + 1 \Rightarrow n < 6$, a contradiction to (1). Consequently, $r = \frac{n+1}{2}$. Then $n$ is odd, and since $r < \frac{n}{3} + 2$, we have that $\frac{n}{3} + 2 > \frac{n + 1}{2} \Rightarrow n < 9$. Since $n$ is odd and (1) holds, $n = 7$, $r = \frac{n + 1}{2} = 4$. It follows from the previous observations that the graph has two cycles (let them be $T_{1}$ and $T_{2}$) on 4 vertices that have common vertex $c$. And there is no such a cycle on 5 vertices that the remaining 2 vertices are adjacent to each other (moreover, there is no such a cycle on 5 vertices that the remaining 2 vertices are not adjacent, because otherwise this cycle meets all the requirements). Therefore, $G$ does not have a cycle on 5 vertices. Since $G$ is 2-connected, $G - \{c\}$ is connected. Hence, there is an edge $c_{1}c_{2} \in E(G)$ for some $c_{1} \in V(T_{1}) \setminus \{c\}$ and $c_{2} \in V(T_{2}) \setminus \{c\}$ (see figure~\ref{figurethirteen}). Since $T_{1}$ and $T_{2}$ are cycles and $|T_{1}| = |T_{2}| = 4$, there exist paths from $c_{1}$ and $c_{2}$ to $c$ whose lengths are at least 2 and these paths pass the edges of the cycles $T_{1}$ and $T_{2}$ respectively. Concatenation of these paths and the edge $c_{1}c_{2}$ form a cycle whose size is at least 5. Since the graph does not have a cycle of size 5, this cycle's size is 6 or 7. Hence, it is Hamiltonian or this cycle contains all the vertices except one and hence it is suitable.

$ $

\textbf{Case b:} \textit{$e_{G}(a, T) = 0$ or $e_{G}(b, T) = 0$.}

\begin{figure}[htb]
	\centering
	\includegraphics[width=0.45\columnwidth, keepaspectratio]{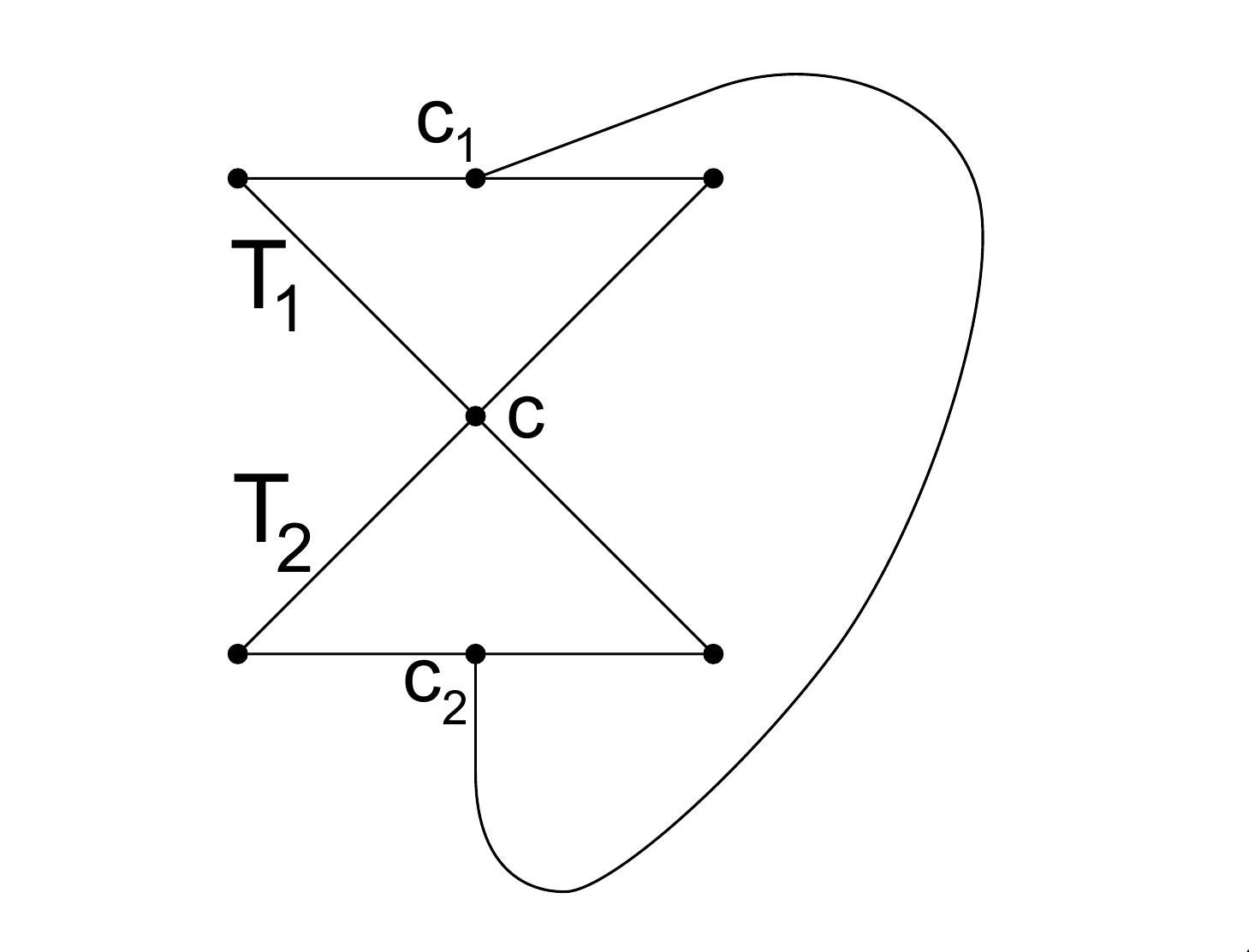}
	\caption{$n = 7$.}
	\label{figurethirteen}
\end{figure}

Without loss of generality, $e_{G}(a, T) = 0$. Keeping previous notations, let $k_{b}$ be the number of neighbours of $b$ in the cycle $T$. Consider $G(H)$. Since the vertex $a$ does not have neighbours among vertices of the cycle $T$ and the vertex $b$ have $k_{b}$ such neighbours, $d_{G(H)}(a) \geqslant \frac{n}{3}$ and $d_{G(H)}(b) \geqslant \frac{n}{3} - k_{b}$. The graph $G(H)$ contains a Hamiltonian path $H$. Recall that $|V(H)| = n - r$. Applying Lemma~\ref{lemma} to the path $H$: if $d_{G(H)}(a) + d_{G(H)}(b) \geqslant n - r$ then the graph $G(H)$ is Hamiltonian (that is the desired property). Consequently, $n - r > d_{G(H)}(a) + d_{G(H)}(b) \geqslant \frac{n}{3} + \frac{n}{3} - k_{b} = \frac{2n}{3} - k_{b} \Rightarrow k_{b} > r - \frac{n}{3}$. By Claim~\ref{claimthree} (item a), $k_{b} \leqslant \frac{r}{3}$. Thus, the following inequalities are deduced: $k_{b} \leqslant \frac{r}{3}$ and $k_{b} > r - \frac{n}{3} \Rightarrow \frac{r}{3} > r - \frac{n}{3} \Rightarrow n > 2r$, contrary to Claim~\ref{claimtwo}. 

\end{proof}

$ $

By Claim~\ref{claimfour}, $G(V(H))$ has a Hamiltonian cycle. From now on we will call $H$ a cycle.

$ $

Consider all pairs $(p, q)$ of distinct vertices of the cycle $H$ such that $pp_{0}, qq_{0} \in E(G)$ for some distinct $p_{0}, q_{0} \in V(T)$ (there exist such pairs or a pair of vertices because $G$ is 2-connected). Let us fix a pair $(p, q)$ such that the distance between $p$ and $q$ by the edges of the cycle $H$ is the shortest among such pairs. Let $p_{0}, q_{0} \in V(T)$ be distinct ($p_{0} \neq q_{0}$) neighbours of $p$ and $q$ respectively.

Let $H_{1}$ and $H_{2}$ be two sections of the cycle $T$ between $p$ and $q$ (see figure~\ref{figurefourteen}). Without loss of generality, $|V(H_{1})| \leqslant |V(H_{2})|$. Note that, for any vertex $f \in V(H_{1})$, inequality $d_{G(V(H))}(f) \geqslant \frac{n}{3}$ holds. Indeed, assume the converse. Since $d_{G}(f) \geqslant \frac{n}{3}$, $f$ has a neighbour (let it be $f_{1}$) in $V(T)$. Recall that $p_{0} \neq q_{0}$. Without loss of generality, $f_{1} \neq p_{0}$. Then $f$ and $p$ have two distinct neighbours in $T$, and the path between $f$ and $p$ by the edges of the cycle $H$ is shorter then the similar path between $p$ and $q$, contrary to the choice of $p$ and $q$.


\begin{figure}[htb]
	\centering
	\includegraphics[width=0.4\columnwidth, keepaspectratio]{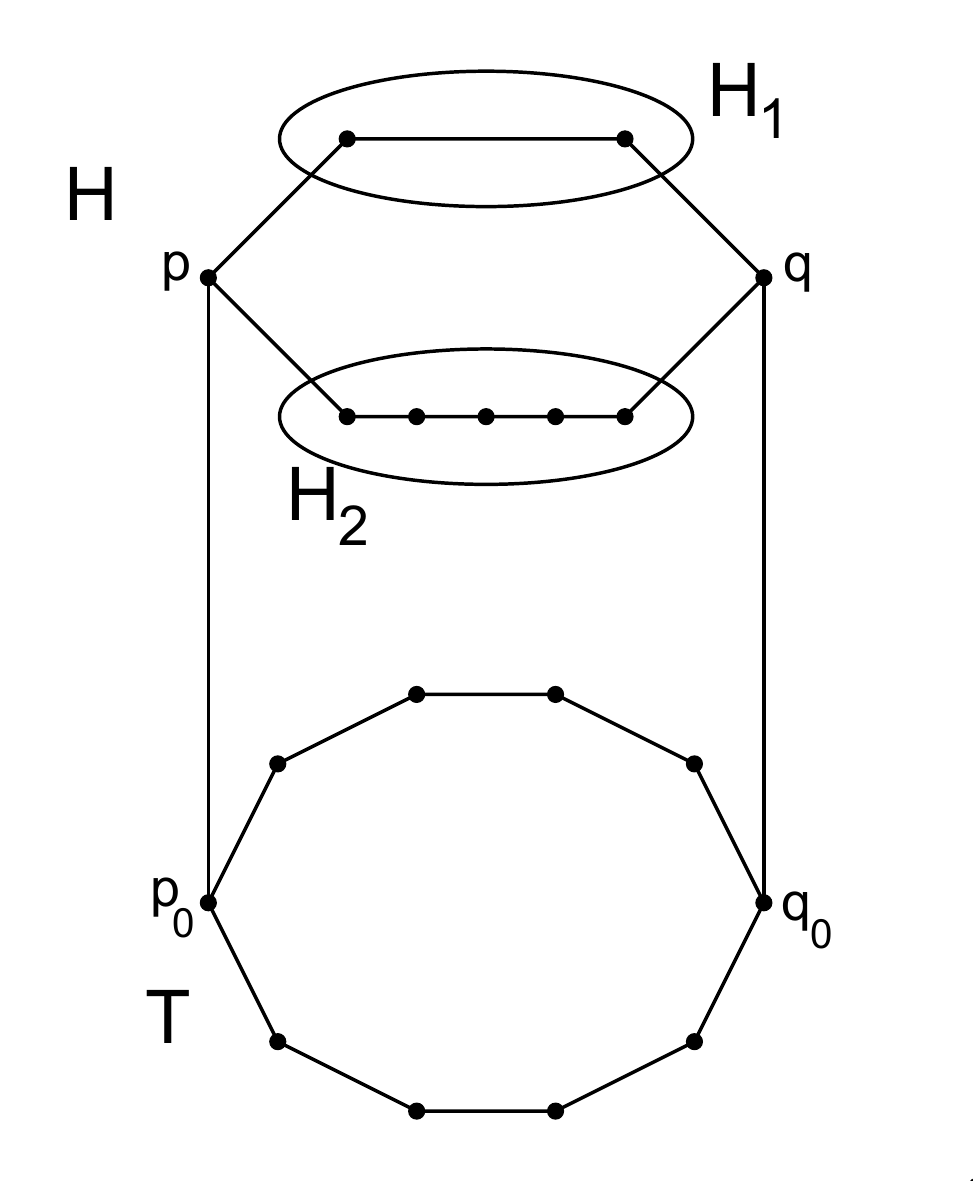}
	\caption{Partition of the cycle $H$ into $H_{1}$ and $H_{2}$; edges $pp_{0}$ and $qq_{0}$, where $p_0$ and $q_{0}$ are distinct vertices of the cycle $T$.}
	\label{figurefourteen}
\end{figure}

\begin{claim} \label{claimfive} The graph $G(V(H))$ has a Hamiltonian path with endpoints $p$ and $q$. 
\end{claim}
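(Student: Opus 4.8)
The plan is to reroute the Hamiltonian cycle $H$ of $\Gamma := G(V(H))$ (supplied by Claim~\ref{claimfour}) into a Hamiltonian $p$–$q$ path. Write $m := |V(\Gamma)| = n-r$ and regard $H$ as the union of its two arcs $H_1, H_2$ between $p$ and $q$, labelling the vertices so that $H_1 = u_1u_2\cdots u_s$ with $u_1 = p$, $u_s = q$, and $H_2 = u_s u_{s+1}\cdots u_m u_1$; thus $s = |V(H_1)| \le \tfrac{m+2}{2}$. By Claim~\ref{claimtwo}, $m \le \tfrac n2$, and by $(2)$, $m \ge \tfrac n3$. I first note a strengthening of the degree bound already used before the Claim: an interior vertex $u_j$ of $H_1$ can have \emph{no} neighbour in $T$, since such a neighbour, being distinct from $p_0$ or from $q_0$, would produce a pair of vertices at $H$-distance $<s-1$ with distinct neighbours in $T$, contradicting the choice of $(p,q)$; hence $d_\Gamma(u_j) = d_G(u_j) \ge \tfrac{n+2}{3}$.

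If $s = 2$ then $H_2$ is already a Hamiltonian $p$–$q$ path, so assume $s \ge 3$. The main idea is to \emph{splice} the interior chain $C := u_2u_3\cdots u_{s-1}$ of $H_1$ into the spanning path $P$ of $\Gamma[V(H_2)]$ read from $p = u_1$ to $q = u_s$ along $H_2$: if $P$ has two consecutive vertices $y, y'$ with $u_2 \sim y$ and $u_{s-1} \sim y'$ (or with $u_2, u_{s-1}$ interchanged), then replacing the edge $yy'$ of $P$ by the detour $y\, u_2 u_3\cdots u_{s-1}\, y'$ gives a Hamiltonian $p$–$q$ path of $\Gamma$. Since $u_2$ and $u_{s-1}$ each have at most $s-3$ neighbours off $P$, each has at least $\tfrac{n+2}{3}-(s-3)$ neighbours on $P$; moreover $u_2\sim p$ and $u_{s-1}\sim q$, the two ends of $P$. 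If no splice pair existed, the neighbourhoods of $u_2$ and of $u_{s-1}$ along $P$ would have to be separated (no edge of $P$ joins one to the other), forcing $m-s+1 \ge 2\big(\tfrac{n}{3}-(s-3)\big)$, i.e. $m+s \ge \tfrac{2n}{3}+5$; together with $s \le \tfrac{m+2}{2}$ and $m \le \tfrac n2$ this is impossible unless both $m$ and $s$ are close to their maxima (which in particular forces $n$ large). This already settles $s = 3,4$ and all sufficiently small $n$.

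The remaining case — $H_1$ nearly as long as $H_2$ — is the main obstacle, since there $u_2, u_{s-1}$ may be only weakly joined to $P$. To handle it I would push the degree input further: by the same distance-minimality argument, every vertex $u_j$ of $H$ lying at $H$-distance $<s-1$ from \emph{both} $p$ and $q$ has no neighbour in $T$, hence $d_\Gamma(u_j)\ge\tfrac{n+2}{3}$; when $s$ is large this covers all of $V(H_1)$ together with a long block of the interior of $H_2$, and when $s$ attains its maximum it covers all of $V(\Gamma)$ except a bounded number of vertices, so that $\Gamma$ (minus those few vertices) satisfies $\delta \ge \tfrac{n+2}{3}\ge\tfrac{m+1}{2}$ and is Hamiltonian-connected by the classical Ore-type bound; assembling a suitable such path with the arc edges $p\sim u_2$ and $q\sim u_{s-1}$ then yields the desired $p$–$q$ path. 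The intermediate range is covered by interpolating — using the block of high-degree interior vertices of $H_2$ together with $u_2, u_{s-1}$ either to perform the splice or to distribute $C$ among several edges of $P$ (this is easier than the single-block splice, since after the first insertions the growing path has ever more slots) — and the finitely many small $n$ are checked directly. The crux, then, is purely the balanced case $s \approx \tfrac m2$: the one-block splice falls just short, and one must feed in the sharper degree information coming from the minimality of $(p,q)$.
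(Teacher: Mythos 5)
There is a genuine gap, and you name it yourself: the ``one-block splice'' falls short precisely in the balanced case $s\approx m/2$, and the devices you offer to close it are not carried out and do not obviously work. Your preliminary observation is fine (the minimality of the pair $(p,q)$ does force interior vertices of $H_1$ to have all their neighbours inside $V(H)$, hence degree at least $\frac{n+2}{3}$ in $G(V(H))$), and the $s=2$ case and the counting when $s$ is small are fine. But the rigid requirement that the whole chain $u_2\cdots u_{s-1}$ be inserted between two \emph{consecutive} vertices of $P$, with the two attachment vertices adjacent to the two \emph{fixed} vertices $u_2$ and $u_{s-1}$, is exactly what cannot be guaranteed when $|H_1|$ and $|H_2|$ are comparable. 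The proposed repairs are problematic: the Ore-type Hamiltonian-connectedness argument is applied to $\Gamma$ ``minus a bounded number of low-degree vertices,'' but those discarded vertices still have to lie on the Hamiltonian $p$--$q$ path you are trying to build, so Hamiltonian-connectedness of a proper subgraph proves nothing about $G(V(H))$; vertices of $H_2$ far from $p$ and $q$ may have up to $\frac{r}{3}$ neighbours in $T$ and genuinely low degree inside $G(V(H))$, so they cannot be ignored. Likewise ``distributing $C$ among several edges of $P$'' requires each sub-segment of $C$ to have suitably placed neighbours on $P$, which you never establish, and ``finitely many small $n$ checked directly'' is not a finite list here because the balanced case occurs for all large $n$.

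For comparison, the paper avoids the block constraint altogether: it takes an extremal object --- the longest $p$--$q$ path $L_2$ inside $V(H)$ whose complement in $H$ is a path $L_1$ all of whose vertices have degree at least $\frac n3$ in $G(V(H))$ --- and derives $\eta\le\frac{4n}{3}-2r-1$ from the fact that neighbours of an endpoint of $L_1$ cannot be consecutive on $L_2$. It then shows $G(V(L_1))$ is spanned by a cycle (so $L_1$ can be attached through \emph{any} two of its vertices, not through two prescribed ones), finds by a marking argument two attachment vertices $y_1,y_2$ on $L_2$ adjacent to the two ends of $L_1$ and at distance at most $2$ on $L_2$, and kills the possible intermediate vertex $z$ by a degree count: $z$ has no neighbour in $L_1$, no two consecutive neighbours on $L_2$, and at most $\frac r3$ neighbours in $T$ by Claim~3(b), contradicting $\delta(G)\ge\frac{n+2}{3}$ together with Claim~2. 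These three ingredients --- the extremal choice of $L_2$, the cycle structure of the leftover, and the Claim~3(b) bound on $T$-neighbours of the interposed vertex --- are exactly the ``sharper degree information'' your sketch says is needed but does not supply; without them the claim is not proved.
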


 


\begin{proof}

Consider the longest path with endpoints $p$ and $q$ (let it be $L_{2}$ and let it be $\eta$ vertices in it) among all such paths that if some vertex $d \in V(H)$ does not belong to this path then $d_{G(V(H))}(d) \geq \frac{n}{3}$, and all the vertices of the cycle $H$, which do not belong to this path, form a path (let this path be $L_{1}$ with endpoints $p_{1}$ and $q_{1}$). Such a path $L_{2}$ exists, because it follows from the proved before that the path $pH_{2}q$ is suitable (it has at least $\frac{n - r}{2} + 1$ vertices and hence $\eta \geq \frac{n - r}{2} + 1$). Assuming the converse, let $L_{2}$ be not a Hamiltonian path. Then the path $L_{1}$ contains $n - r - \eta \geq 1$ vertices. By the condition on $L_{2}$, $d_{G(V(H))}(p_{1}) \geq \frac{n}{3}$. Therefore, there are at least $\frac{n}{3} - (n - r - \eta - 1) = r + \eta + 1 - \frac{2n}{3}$ neighbours of the vertex $p_{1}$ among vertices of the path $L_{2}$. Note that neighbours of the vertex $p_{1}$ cannot be neighbours in the order of passing the path $L_{2}$ (otherwise, the vertex $p_{1}$ can be 'inserted' between these neighbours into the path $L_{2}$ (see figure~\ref{figurefifteen}), and then the path $L_{2}$ is longer, and all the remaining vertices form a path, and their degrees in $G(V(H))$ are at least $\frac{n}{3}$, contrary to the choice of the path $L_{2}$). Thus, $\eta \geq 2(r + \eta + 1 - \frac{2n}{3}) - 1 \Rightarrow \eta \leq \frac{4n}{3} - 2r - 1$.

\begin{figure}[htb]
	\centering
	\includegraphics[width=0.9\columnwidth, keepaspectratio]{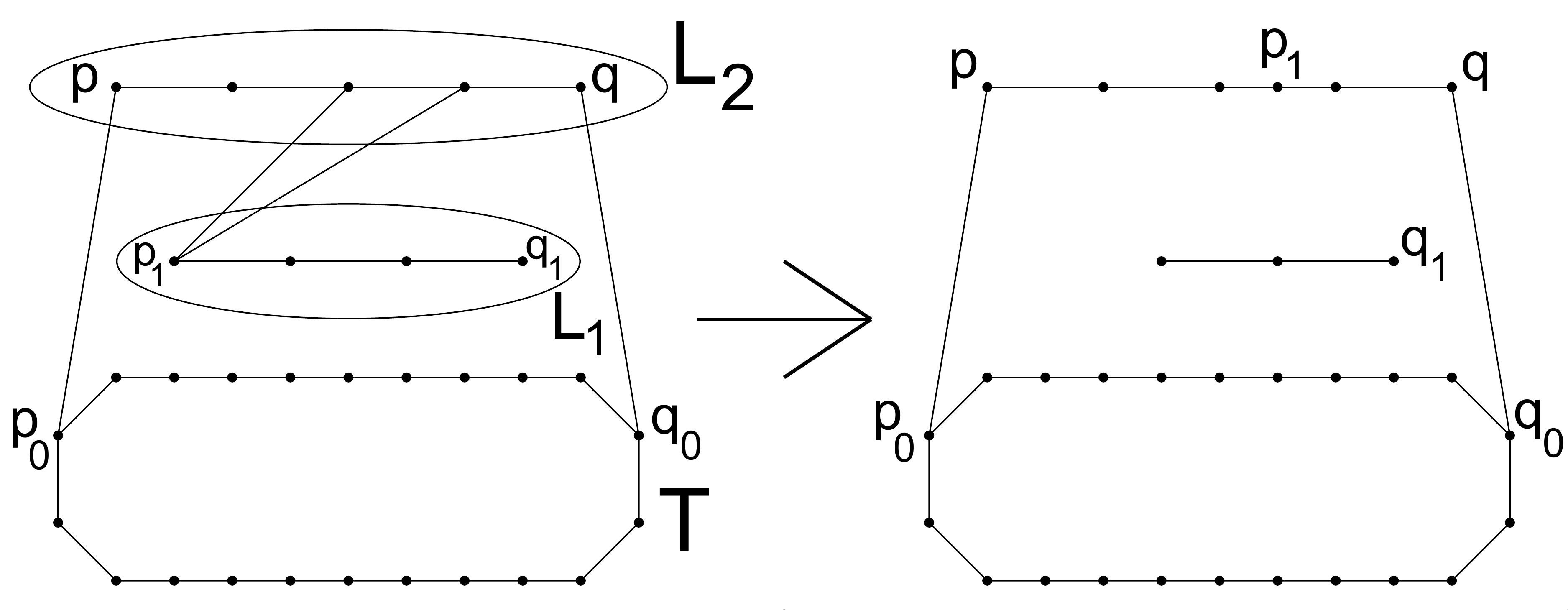}
	\caption{Partition of the cycle $H$ into two paths: $L_{1}$ and $L_{2}$. Demonstration that $p_{1}$ cannot have two consecutive (in the order of passing the path $L_{2}$) neighbours in $L_{2}$.}
	\label{figurefifteen}
\end{figure}

We need to prove that $G(V(L_{1}))$ has a Hamiltonian cycle. Assuming the converse, $G(V(L_{1}))$ does not have a Hamiltonian cycle. Firstly, let us consider the case, where $L_{1}$ contains at least 3 vertices. Since it has $n - r - \eta$ vertices and $L_{1}$ is a path, by Lemma~\ref{lemma}, $d_{G(L_{1})}(p_{1}) + d_{G(L_{1})}(q_{1}) < n - r - \eta$. Without loss of generality, $d_{G(L_{1})}(p_{1}) < \frac{n - r - \eta}{2}$. Then there are at least $\frac{n}{3} - \frac{n - r - \eta - 1}{2} = \frac{r + \eta + 1}{2} - \frac{n}{6}$ neighbours of the vertex $p_{1}$ among vertices of the path $L_{2}$. Recall that (see figure~\ref{figurefifteen}) neighbours of $p_{1}$ cannot be neighbours in the order of passing the path $L_{2}$, and hence $\eta \geq 2(\frac{r + \eta + 1}{2} - \frac{n}{6}) - 1 \Rightarrow \eta \geq r + \eta - \frac{n}{3} \Rightarrow \frac{n}{3} \geq r$, contrary to Claim~\ref{claimtwo}.

So, it remains to consider the case, where $L_{1}$ consists of one vertex or two vertices. Then $\eta \geq n - r - 2$, and since $\eta \leq \frac{4n}{3} - 2r - 1$, we have that $\frac{4n}{3} - 2r - 1 \geq n - r - 2 \Rightarrow r \leq \frac{n}{3} + 1$. By Claim~\ref{claimtwo}, $r \geq \frac{n}{2}$. Then $\frac{n}{3} + 1 \geq \frac{n}{2} \Rightarrow n \leq 6$. Then, by (1), $n = 6$, and hence all the inequalities in this paragraph are actually equalities. Thus, $n = 6$, $r = \frac{n}{2} = 3$, $\eta = n - r - 2 = 1$. But $\eta \geq 2$ because $L_{2}$ includes distinct vertices $p$ and $q$, a contradiction.

Consequently, $G(V(L_{1}))$ has a Hamiltonian cycle. From now on we will call $L_{1}$ a cycle.

\begin{figure}[htb]
	\centering
	\includegraphics[width=0.75\columnwidth, keepaspectratio]{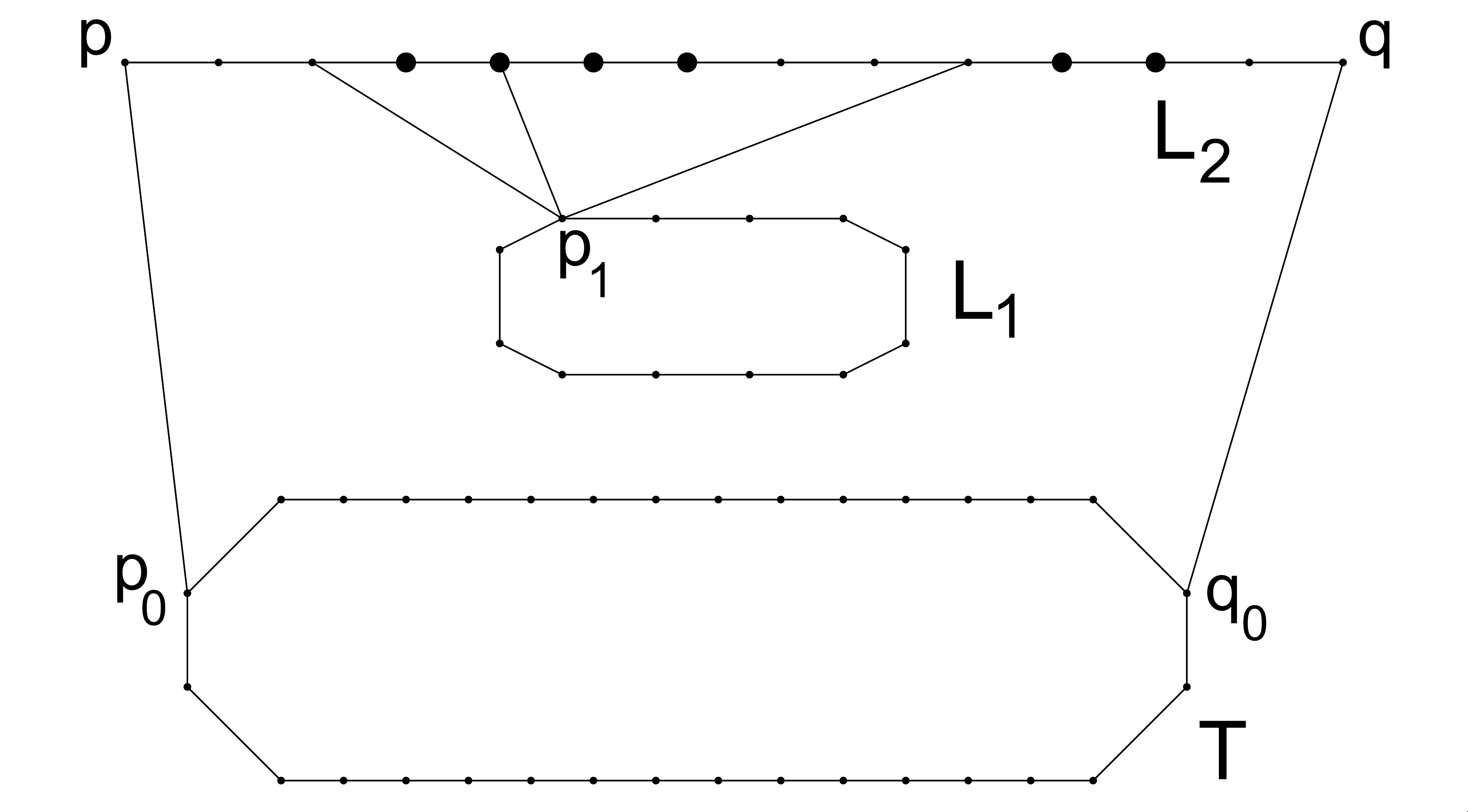}
	\caption{For each neighbour of $p_{1}$ we \textit{mark} the following two vertices in accordance with the fixed order of passing the path $L_{2}$.}
	\label{figuresixteen}
\end{figure}

We need to prove that there exist 2 vertices of the path $L_{2}$ such that one of them is adjacent to $p_{1}$, the other one is adjacent to $q_{1}$, and they are neighbours in the path $L_{2}$ or they have exactly one vertex between them in the order of passing the path $L_{2}$. Assuming the converse, they do not exist. We fix some direction of the path $L_{2}$ (for instance, from $p$ to $q$). Recall that there are at least $r + \eta + 1 - \frac{2n}{3}$ neighbours of $p_{1}$ among vertices of the path $L_{2}$. For each of them, we \textit{mark} the following two vertices in accordance with the fixed order of passing the path $L_{2}$ (see figure~\ref{figuresixteen}). Since it is proved (figure~\ref{figurefifteen} is dedicated to that) that all the neighbours of the vertex $p_{1}$ are not neighbours in the order of passing the path $L_{2}$, no vertex is marked twice. Hence, there are at least $2(r + \eta + 1 - \frac{2n}{3} - 1)$ marked vertices (for $q$ we do not mark anything, and hence we lose two vertices, and for the neighbour of $q$ in the path $L_{2}$ we mark only one vertex, and hence we lose one vertex, and therefore we lose at most two vertices because $q$ and the neighbour of $q$ in the path $L_{2}$ cannot be neighbours of the vertex $p_{1}$ simultaneously). Since we assumed the converse, all the neighbours of $q_{1}$ in the path $L_{2}$ (there are at least $r + \eta + 1 - \frac{2n}{3}$ such neighbours) cannot be marked. Thus, $$\eta \geq r + \eta + 1 - \frac{2n}{3} + 2(r + \eta + 1 - \frac{2n}{3} - 1) \Rightarrow $$

$$\Rightarrow 0 \geq r + 1 - \frac{2n}{3} + 2r + 2\eta - \frac{4n}{3} \Rightarrow 2n \geq 3r + 2\eta + 1\textit{.}$$ Since $\eta \geq \frac{n - r}{2} + 1$, we have $2n \geq 3r + n - r + 2 + 1 \Rightarrow n \geq 2r + 3 \Rightarrow r < \frac{n}{2}$, contrary to Claim~\ref{claimtwo}.

\begin{figure}[htb]
	\centering
	\includegraphics[width=0.55\columnwidth, keepaspectratio]{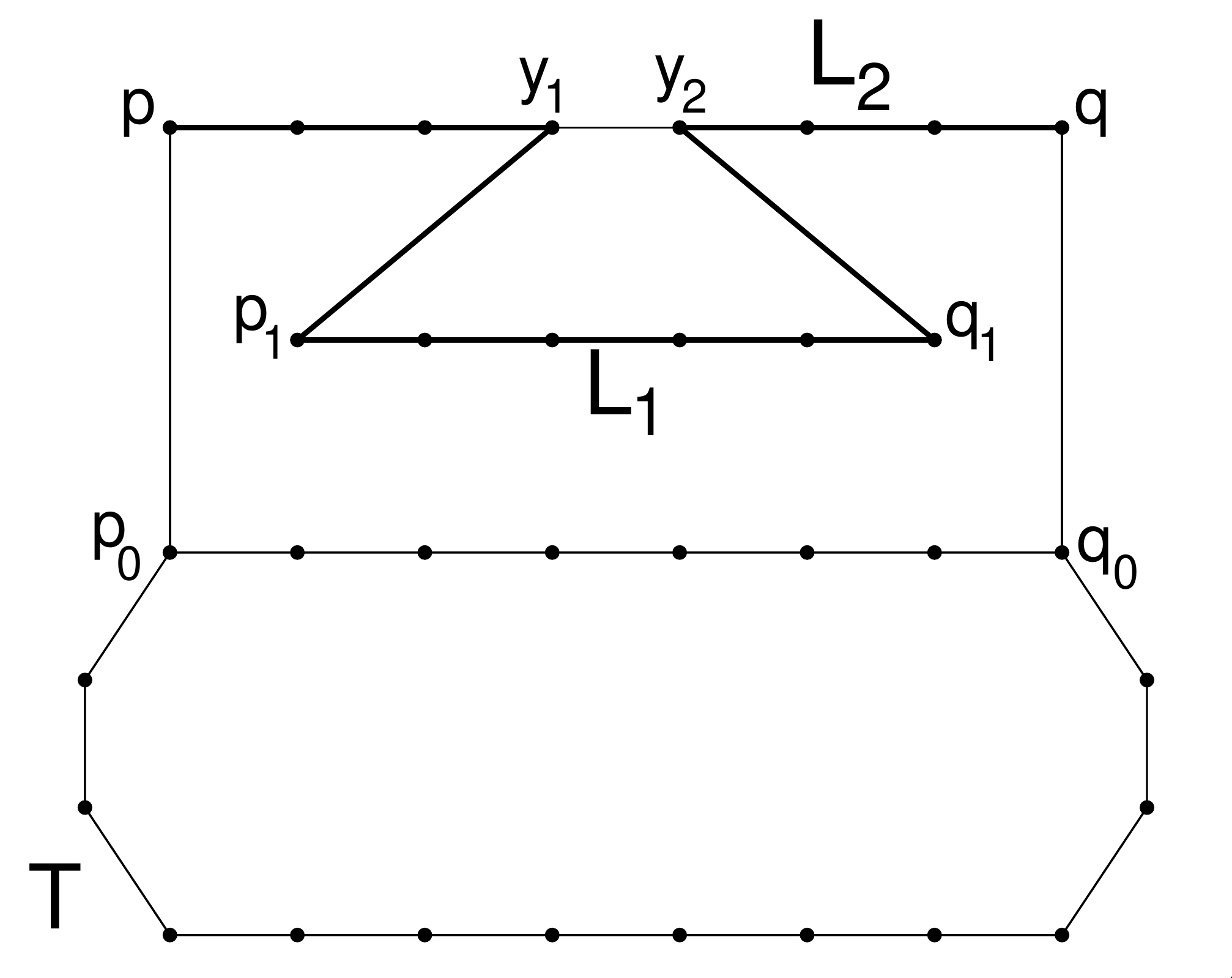}
	\caption{There is a Hamiltonian path in $G(V(H))$ with endpoints $p$ and $q$ if $y_{1}$ and $y_{2}$ are neighbours in the order of passing the path $L_{2}$.}
	\label{figureseventeen}
\end{figure}

Consequently, there exist 2 vertices of the path $L_{2}$ such that one of them is adjacent to $p_{1}$, the other one is adjacent to $q_{1}$ and they are neighbours in the path $L_{2}$ or they have exactly one vertex between them in the order of passing the path $L_{2}$. Let them be $y_{1}$ and $y_{2}$, let $y_{1}$ be closer to the vertex $p$ in the order of passing the path $L_{2}$, and $y_{2}$ - closer to $q$, and, without loss of generality, let $y_{1}$ be adjacent to $p_{1}$, $y_{2}$ be adjacent to $q_{1}$. If $y_{1}$ and $y_{2}$ are neighbours in the order of passing of the path $L_{2}$ then there exists a Hamiltonian path in $V(H)$ with endpoints $p$ and $q$ (see figure~\ref{figureseventeen}).

Therefore, there is a vertex (we denote it by $z$) between $y_{1}$ and $y_{2}$ in the path $L_{2}$. Since $z$ is a neighbour of both $y_{1}$ and $y_{2}$, as it was already mentioned (figure~\ref{figurefifteen} is dedicated to that), $e_{G}(z, \{p_{1}, q_{1}\}) = 0$. Assume that $e_{G}(z, L_{1}) \geqslant 1$. Then note that there is a path with endpoints $p$ and $q$ such that this path includes $V(L_{2})$ and at least one another vertex (it passes the path $L_{2}$ from $p$ to $z$, then it enters the cycle $L_{1}$, passes the edges of the cycle $L_{1}$ to the vertex $q_{1}$, goes to the vertex $y_{2}$ and then passes the edges of the path $L_{2}$ to the vertex $q$, see figure~\ref{figureeighteen}), a contradiction to the choice of the cycle $L_{2}$.

\begin{figure}[htb]
	\centering
	\includegraphics[width=0.5\columnwidth, keepaspectratio]{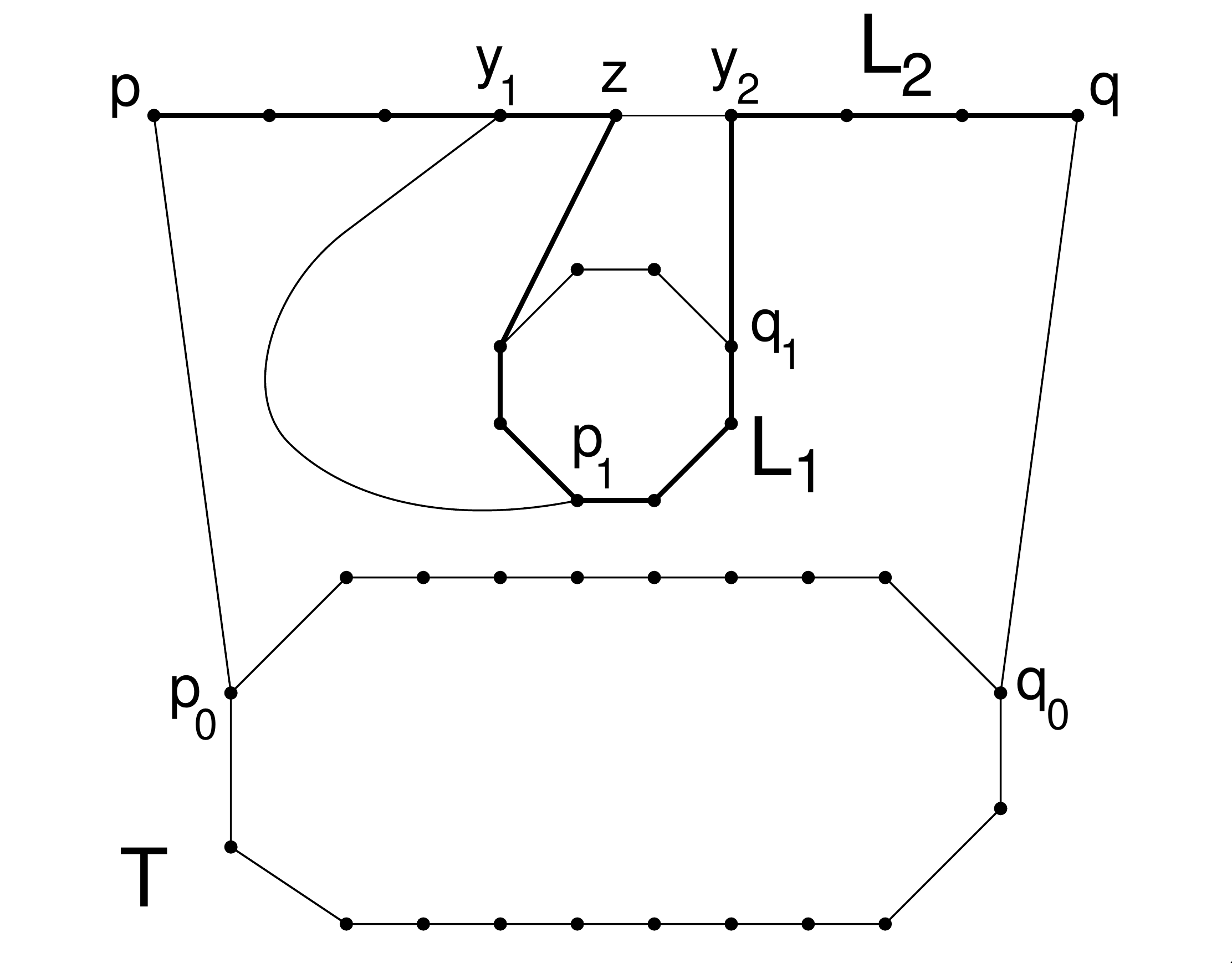}
	\caption{If $z$ has a neighbor from the cycle $L_{1}$ then there is a path (marked in bold) from $p$ to $q$ whose existence is contrary to the choice of $L_{2}$.}
	\label{figureeighteen}
\end{figure}

Hence, $e_{G}(z, L_{1}) = 0$. Suppose, $z$ has two consecutive neighbours in the order of passing the path $L_{2}$. Then there is the following Hamiltonian path in $V(H)$ with endpoints $p$ and $q$: it goes from $p$ to $y_{1}$ by the edges of the path $L_{2}$, it passes the edge $y_{1}p_{1}$, then it goes by the edges of the path $L_{1}$ to $q_{1}$, after that it enters the vertex $y_{2}$, and then it goes to $q$ by the edges of the path $L_{2}$ and, in so doing, we insert $z$ somewhere in this path between these neighbours (see figure~\ref{figurenineteen}).

Thus, $z$ does not have two consecutive neighbours in the order of passing the path $L_{2}$. Let it be $\tau_{1}$ vertices in the path $L_{2}$ between $p$ and $y_{1}$ (\textit{first section of the path $L_{2}$}) and let it be $\tau_{2}$ vertices in the path $L_{2}$ between $y_{2}$ and $q$ (\textit{second section of the path $L_{2}$}). Then $\tau_{1} + \tau_{2} = \eta - 1$. Since the vertex $z$ does not have two consecutive neighbours among vertices of the path $L_{2}$ in the order of passing the path $L_{2}$, it has at most $\frac{\tau_{1} + 1}{2}$ neighbours in the first section and at most $\frac{\tau_{2} + 1}{2}$ neighbours in the second section. Therefore, it has at most $\frac{\tau_{1} + 1}{2} + \frac{\tau_{2} + 1}{2} = 1 + \frac{\tau_{1} + \tau_{2}}{2} = 1 + \frac{\eta - 1}{2} = \frac{\eta + 1}{2}$ neighbours among vertices of the path $L_{2}$.

\begin{figure}[htb]
	\centering
	\includegraphics[width=0.5\columnwidth, keepaspectratio]{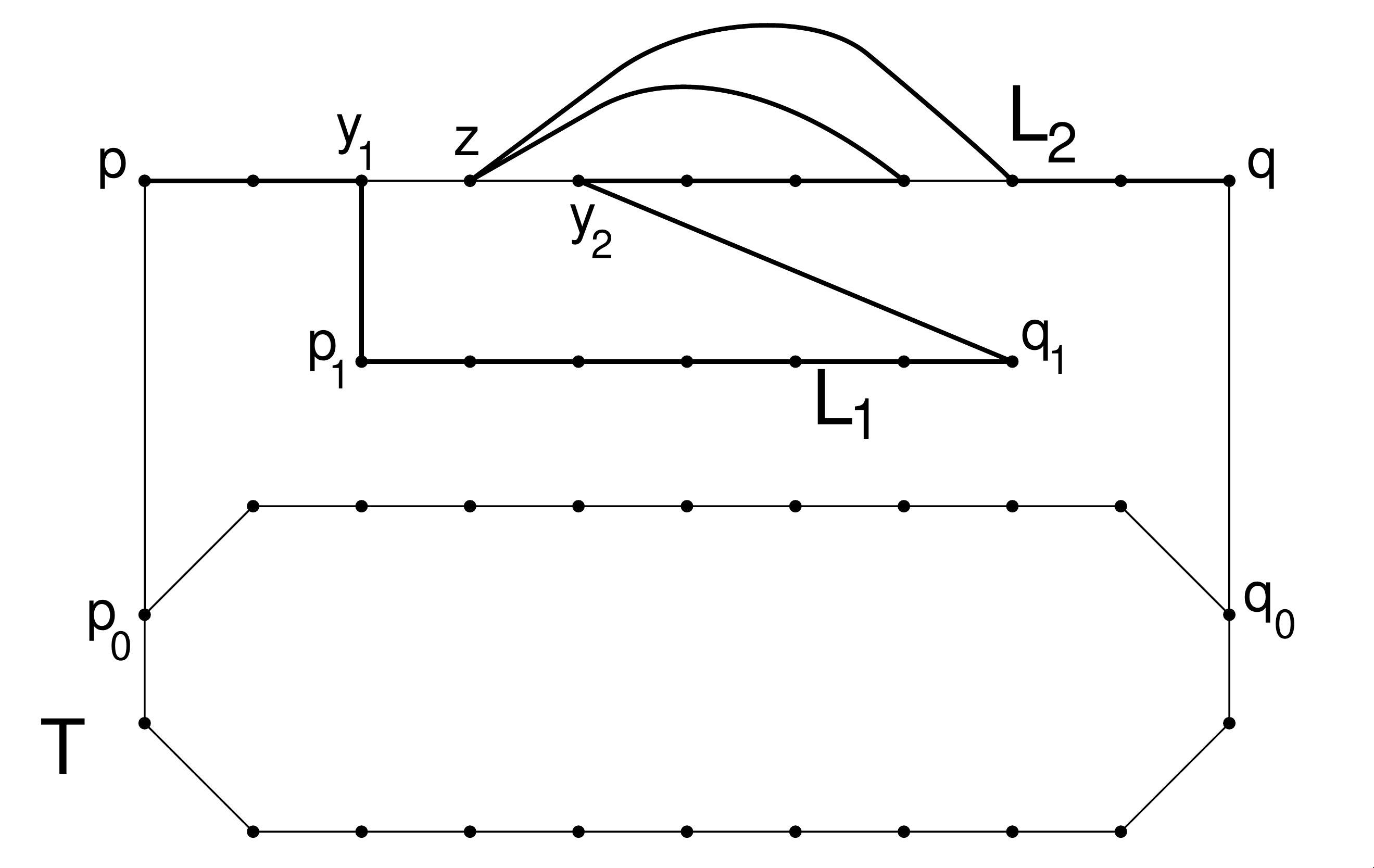}
	\caption{If $z$ has two consecutive neighbours in the order of passing the path $L_{2}$ then there is a Hamiltonian path in $V(H)$ (marked in bold).}
	\label{figurenineteen}
\end{figure}

By Claim~\ref{claimthree} (item b), $z$ has at most $\frac{r}{3}$ neighbours in the cycle $T$. Consequently, $d_{G}(z) \leq \frac{\eta + 1}{2} + \frac{r}{3}$. Then it follows from $d_{G}(z) \geq \frac{n + 2}{3}$ that $\frac{n + 2}{3} \leq \frac{\eta + 1}{2} + \frac{r}{3} \Rightarrow 2n + 4 \leq 3\eta + 3 + 2r \Rightarrow \eta \geq \frac{2n}{3} + \frac{1}{3} - \frac{2r}{3}$. Since $\eta \leq \frac{4n}{3} - 2r - 1$, we have that $\frac{4n}{3} - 2r - 1 \geq \frac{2n}{3} + \frac{1}{3} - \frac{2r}{3} \Rightarrow \frac{2n}{3} - \frac{4}{3} \geq \frac{4r}{3} \Rightarrow \frac{n}{2} > r$, contrary to Claim~\ref{claimtwo}.

\end{proof}

Finally, by Claim~\ref{claimfive}, there is a Hamiltonian path (we denote it by $H_{0}$) in the graph $G(V(H))$ with endpoints $p$ and $q$. Then there is a cycle: the path $H_{0}$, the edges $pp_{0}$ and $qq_{0}$ and the major half of the cycle $T$ between $p_{0}$ and $q_{0}$ (see figure~\ref{figuretwenty}). The length of this cycle is at least $\frac{r}{2} + 2 + n - r - 1 = n - \frac{r}{2} + 1$ (2 edges $pp_{0}$, $qq_{0}$ and the major half of the cycle $T$ have at least $\frac{r}{2} + 2$ edges and the Hamiltonian path in the graph $G(V(H))$; since there are exactly $n - r$ vertices, there are $n - r - 1$ edges in this path). Note that this is a cycle such that all other vertices beyond this cycle are one of the halves of the cycle $T$, that is, they form a path. Then it follows from the choice of the cycle $T$ that the following must be true: $r \geq n - \frac{r}{2} +  1 \Rightarrow \frac{3r}{2} > n \Rightarrow r > \frac{2n}{3}$, contrary to (2).  $\qedsymbol$

\begin{figure}[htb]
	\centering
	\includegraphics[width=0.4\columnwidth, keepaspectratio]{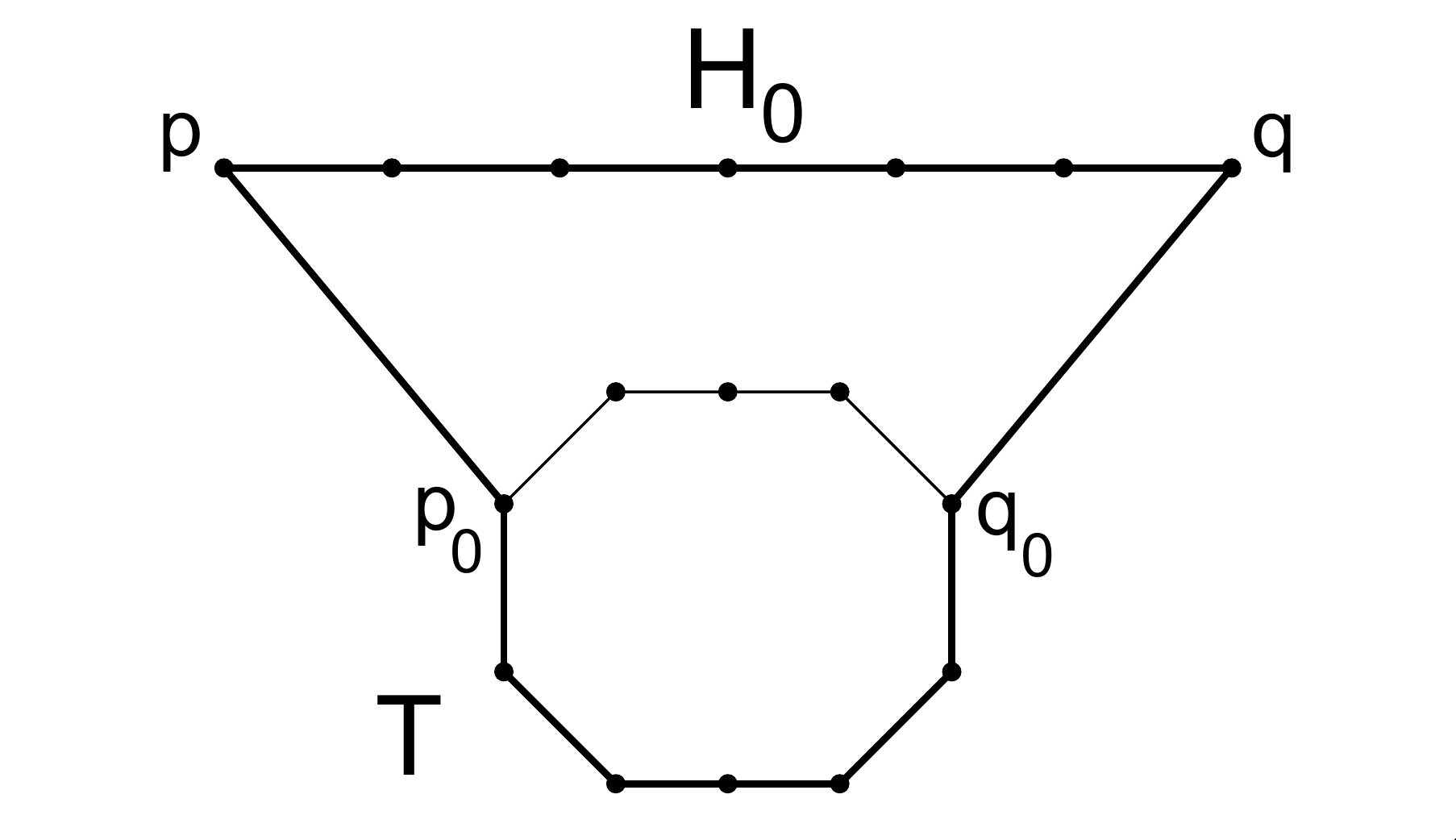}
	\caption{If there is a Hamiltonian path $H_{0}$ in the graph $G(V(H))$ with endpoints $p$ and $q$ then there is a cycle whose existence is contrary to the choice of $T$.}
	\label{figuretwenty}
\end{figure}




$$ $$

\textbf{\Large{3 Our estimate cannot be improved}}

$ $

\begin{claim} \label{lastclaim} Our result is best possible in the sense that it becomes false if $\frac{n + 2}{3}$ is replaced by any smaller number. More specifically, for any $n \geq 8$ and for any $2 < \nu < \frac{n + 2}{3}$, there exists a 2-connected graph $G$ such that $v(G) = n$, $\delta(G) = \nu$ and in which there is no a cycle such that the vertex set beyond this cycle is independent.

\end{claim}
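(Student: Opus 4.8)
The plan is to produce an explicit family of graphs witnessing sharpness. First I would reformulate the property: a graph $G$ has a cycle $W$ with $V(G-W)$ independent if and only if there is an independent set $I\subseteq V(G)$ with $|V(G)\setminus I|\geq 3$ such that the induced subgraph $G-I$ has a Hamiltonian cycle (given $W$, take $I=V(G)\setminus V(W)$; conversely a Hamiltonian cycle of $G-I$ serves as $W$). So it suffices to construct, for every integer $\nu\geq 3$ and every $n\geq 3\nu-1$ (which is what the hypotheses $n\geq 8$ and $2<\nu<\frac{n+2}{3}$ give, since $\nu$ is necessarily an integer and $\nu<\frac{n+2}{3}\iff n\geq 3\nu-1$), a $2$-connected graph $G$ on $n$ vertices with $\delta(G)=\nu$ such that $G-I$ is non-Hamiltonian for \emph{every} independent set $I$.

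The construction: put $t:=\nu-1\ (\geq 2)$ and $s:=n-3\nu+3\ (\geq 2)$. Take a clique $T$ on $t$ vertices, together with $t+1$ pairwise disjoint cliques $B_1,\dots,B_t,B_{t+1}$ where $|B_1|=\dots=|B_t|=2$ and $|B_{t+1}|=s$; join every vertex of every $B_i$ to every vertex of $T$, and add no further edges. Then $v(G)=t+2t+s=n$.

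I would then check three things. (1) Minimum degree: a vertex of a two-vertex block has degree $1+t=\nu$, a vertex of $B_{t+1}$ has degree $(s-1)+t=s+\nu-2\geq\nu$, and a vertex of $T$ has degree $(t-1)+(2t+s)\geq\nu$; hence $\delta(G)=\nu$. (2) $2$-connectivity: deleting a single vertex $v$ leaves $T\setminus\{v\}$ nonempty (as $|T|\geq 2$), keeps each surviving block vertex adjacent to $T\setminus\{v\}$, and each block internally connected, so $G-v$ is connected. (3) Non-Hamiltonicity after deleting an independent set: since each $B_i$ is a clique on at least two vertices, an independent set $I$ meets $B_i$ in at most one vertex, so $B_i\setminus I\neq\varnothing$; hence $G-(I\cup T)$ has exactly the $t+1$ nonempty components $B_1\setminus I,\dots,B_{t+1}\setminus I$. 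Writing $T':=T\setminus I$, the graph $G-I$ then has more than $|T'|$ components after removing $T'$ (namely $t+1>t\geq|T'|$), contradicting the standard necessary condition for Hamiltonicity that $c(G'-S)\leq|S|$ for every proper nonempty $S$. Thus $G-I$ has no Hamiltonian cycle, for every independent $I$, and by the reformulation $G$ has no cycle whose complement is independent.

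The only real difficulty is isolating the right configuration: I want a single small separator $T$ creating strictly more than $|T|$ components \emph{and} this surplus to survive deletion of an arbitrary independent set — which is exactly why the blocks $B_i$ must be cliques, so each meets $I$ in at most one vertex and stays nonempty. The remainder is bookkeeping; the one mildly delicate point, covering all $n\geq 3\nu-1$ at once, is handled by letting the last block $B_{t+1}$ absorb the slack $s=n-3\nu+3$ while its vertices keep degree $\geq\nu$.
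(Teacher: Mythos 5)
Your construction and verification are correct. The reformulation (a suitable cycle exists iff $G-I$ is Hamiltonian for some independent $I$) is sound, the degree and $2$-connectivity checks go through, and since every block is a clique on at least two vertices, an independent set $I$ can neither empty a block nor contain more than one vertex of $T$; hence $T\setminus I$ is a nonempty proper cut of $G-I$ whose removal leaves $t+1>|T\setminus I|$ components, so $G-I$ fails the standard necessary condition $c(G'-S)\leq |S|$ for Hamiltonicity. The paper's example rests on the same underlying principle --- join a small set to strictly more than that many cliques, each of size at least two so that deleting an independent set cannot wipe out any of them --- but with dual parameters: the paper joins a \emph{non-adjacent} pair $f_{1},f_{2}$ to three cliques, two of size $\nu-1$ (which pin down $\delta(G)=\nu$) and one of size $n-2\nu$ absorbing the slack, whereas you join a clique of size $\nu-1$ to $\nu$ cliques, most of size $2$, with one absorbing the slack. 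The paper also argues the obstruction directly on the cycle: a cycle whose complement is independent must visit all three cliques (each contains an edge), yet every passage between cliques must use $f_{1}$ or $f_{2}$, and only two such vertices exist; your route through the cut condition on $G-I$ is the same count made systematic, stated once for all independent $I$ at the cost of invoking the toughness-type lemma. One small point to make explicit: $T\setminus I\neq\varnothing$, which follows exactly as for the blocks because $T$ is a clique with $t\geq 2$ vertices (and if it were empty, $G-I$ would be disconnected and hence non-Hamiltonian anyway).
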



\begin{proof}

\begin{figure}[htb]
	\centering
	\includegraphics[width=0.65\columnwidth, keepaspectratio]{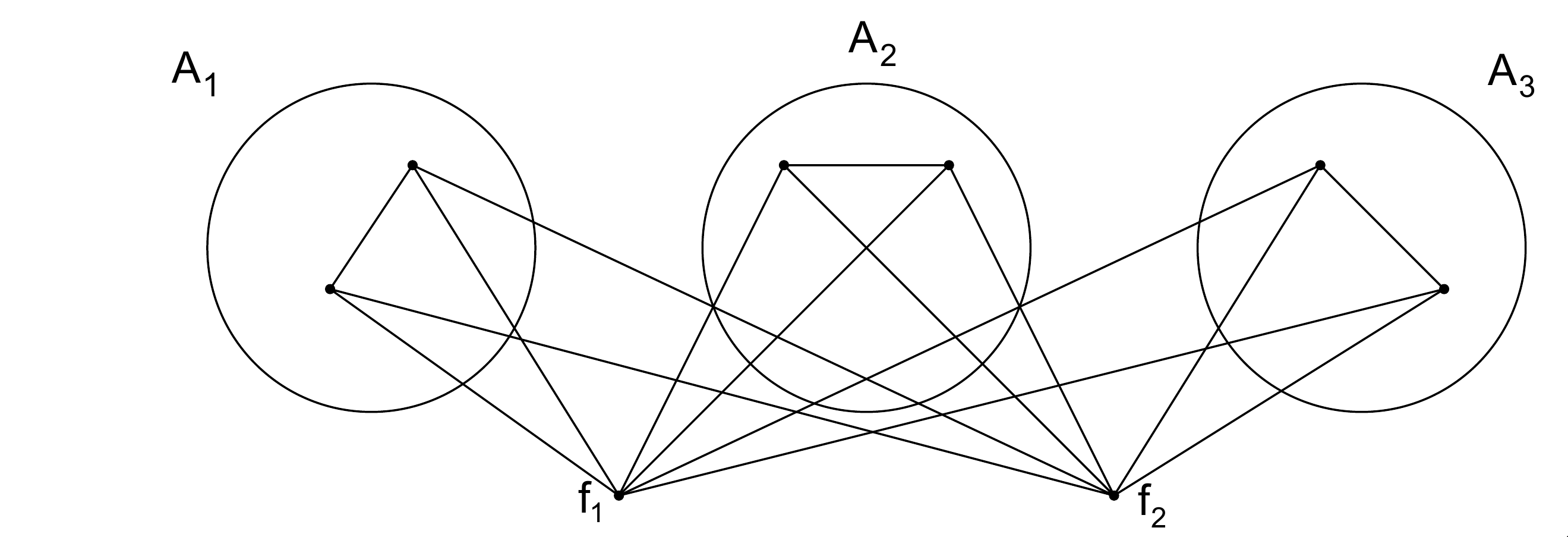}
	\caption{Our example for $n = 8$, $\nu = 3$.}
	\label{figuretwentyone}
\end{figure}

Let us take some 2 vertices, call them $f_{1}$ and $f_{2}$ and divide the remaining vertices into 3 groups: the first two groups contain exactly $\nu - 1$ vertices (let them be $A_{1}$ and $A_{2}$), and the third has $n - 2\nu$ vertices (let it be $A_{3}$). There are such edges in this graph: any two vertices in the same group are adjacent to each other, any two vertices in distinct groups are not adjacent to each other, and the vertices $f_{1}$ and $f_{2}$ are adjacent to all vertices, but not to each other (see figure~\ref{figuretwentyone}). Note that $\delta(G) = \nu$. Indeed, if $v \in A_{1} \cup A_{2}$ then $d_{G}(v) = \nu$, $d_{G}(f_{1}) = d_{G}(f_{2}) = n - 2 \geq \nu$ (since $n \geq 3$, $n - 2 \geq \frac{n}{3} > \nu$), and if $v \in A_{3}$ then $d_{G}(v) = n - 2\nu + 1 \geq \nu$.

Let us show that this graph is 2-connected. It is sufficient to show that $G - \{v\}$ is connected for any $v \in V(G)$. Without loss of generality, $v \neq f_{1}$. Clearly, $G - \{v\}$ is connected because all vertices in $(G - \{v\}) \setminus \{f_{2}\}$ are adjacent to $f_{1}$, and $f_{2}$ is adjacent to all vertices in $G \setminus \{v_{1}\}$.


It remains to prove that there is no a cycle in this graph such that the vertex set beyond this cycle is independent. Assuming the converse, there exists such a cycle. Then note that it must contain some vertices from all groups $A_{1}$, $A_{2}$, $A_{3}$ (otherwise, some entire group is not contained in the cycle, but any group has edges because $A_{1}$ and $A_{2}$ have $\nu - 1 > 1$ vertices, $A_{3}$ has $n - 2\nu > 1$ vertices). Thus, this cycle has vertices from all three groups. But, when this cycle goes from one group to another group, it must visit one of the vertices $f_{1}$, $f_{2}$. There are 3 groups $A_{1}$, $A_{2}$, $A_{3}$ and 2 vertices $f_{1}$, $f_{2}$, a contradiction.


\end{proof}

\newpage

\bibliographystyle{apa}

\begin{thebibliography}{99} 


\bibitem{bondy1} {\sc J.\,A.\,Bondy, \sc U.\,S.\,R.\,Murty.} {\it Graph Theory With Applications.}
Elsevier Science, 1976.


\bibitem{chartrandkapoor} {\sc G.\,Chartrand, \sc S.\,F.\,Kapoor.} {\it The cube of every connected graph is 1-Hamiltonian.} J. Res. Nat. Bur. Standards Sect. B 73 (1969), 47-48.



\bibitem{diestel2} {\sc R.\,Diestel.} {\it Graph Theory.}
Springer, 1997-2016 (editions 1-5).


\bibitem{linial3} {\sc N.\,Linial.} {\it A lower bound on the circumference of a graph.}
Discrete Math. v.15 (1976) p.297-300.



\bibitem{thomassen4} {\sc C.\,Thomassen.} {\it A theorem on paths in planar graphs.} Journal of Graph Theory, Vol.7 (1983) 169-176.

 

\end{thebibliography}

\end{document}